
\documentclass[preprint, 11pt]{elsarticle}
\usepackage{bbm}




\usepackage{amssymb}
\usepackage{amsfonts}
\usepackage{mathrsfs}

\usepackage{color, amstext,amsthm}

\usepackage{epsfig, latexsym}
\usepackage{amsmath}
\usepackage{graphicx}
\usepackage{longtable}


\numberwithin{equation}{section}

\allowdisplaybreaks
\newtheorem{definition}{Definition}[section]
\newtheorem{theorem}{Theorem}[section]
\newtheorem{lemma}{Lemma}[section]
\newtheorem{remark}{Remark}[section]

\newtheorem{assumption}{Assumption}





\journal{arXiv}

\begin{document}

\begin{frontmatter}



\title{The impact of multiplicative noise in SPDEs close to bifurcation via amplitude equations. }


\author{Hongbo Fu}
\ead{hbfu@wtu.edu.cn}
\address{Research Center of Nonlinear Science, College of Mathematics and Computer Science, Wuhan Textile University, Wuhan, 430073, PR China}

\author{Dirk Bl\"{o}mker} 
\ead{dirk.bloemker@math.uni-augsburg.de}
\address{Institut f\"{u}r Mathematik, Universit\"{a}t Augsburg, 86135, Augsburg, Germany}

\begin{abstract}
This article deals with the  approximation of a stochastic
partial differential equation (SPDE) via  amplitude equations.
We consider an SPDE with a cubic nonlinearity 
perturbed by a general multiplicative noise that preserves 
the constant trivial solution  and we study the dynamics around it 
for the deterministic equation being close to a bifurcation.

Based on the separation of time-scales close to a
change of stability, we rigorously derive an amplitude equation
describing the dynamics of the bifurcating pattern.

This allows us to approximate the original infinite dimensional
dynamics by a simpler effective dynamics associated with the solution of the amplitude
equation. To illustrate the abstract result we apply it to a
simple one-dimensional stochastic Ginzburg-Landau equation.
\end{abstract}

\begin{keyword}
stochastic partial differential equations, amplitude equation, multiplicative noise, multi-scale analysis, bifurcation,
slow fast system.

MSC: primary 60H15, secondary 70K70, 60H10, 35R60
\end{keyword}

\end{frontmatter}

\section{Introduction}

In this paper
we study a class of stochastic partial differential equations (SPDEs) of the following
form
\begin{eqnarray}\label{original-eqn}
du(t)=[\mathcal{A}u(t)+\varepsilon^2\mathcal
{L}u(t)+\mathcal{F}(u(t))]dt+\varepsilon G(u(t))dW(t),
\end{eqnarray}
where $\mathcal{A}$ is a non-positive self-adjoint operator with
finite-dimensional kernel,
$\varepsilon^2\mathcal {L}u(t)$ is a
small deterministic perturbation with
$\varepsilon>0$ measuring the distance to the change of stability.
The nonlinearity $\mathcal{F}$ is a cubic
mapping, and $G(u)$ is Hilbert-Schmidt operator with $G(0)=0$ so that the constant $u=0$ is a solution to equation \eqref{original-eqn}.
The noise is given via a (possibly infinite dimensional cylindrical) Wiener process  $W$ on some stochastic basis.

Our aim is to study in the limit $\varepsilon\to 0$
the asymptotic dynamics of the solution $u(t)$
to equation \eqref{original-eqn} on the natural slow
time-scale of order $\varepsilon^{-2}$.

Near a change of stability of the linearized operator
$\mathcal{A} + \varepsilon^2\mathcal{L}$, a natural separation of
time-scales allows  the original system to be transferred into slow
dynamics on a dominant pattern, which couples to dynamics on a fast
time scale. A reduced equation eliminating the fast variable and
characterizing the behavior of dominant modes significantly
simplifies the dynamics to a stochastic differential equation (SDE).
This equation identifies the amplitudes of dominant pattern and is
often called amplitude equation.

Amplitude approximation  plays a prominent role in qualitative
analysis on the dynamics of stochastic systems near a change
stability.
For additive noise amplitude approximation for SPDEs has been studied in many cases starting from
\cite{BMPS01-DCDS} and later  \cite{DBHa:04,DMG-2007-nonlinearity,DW-2009-EJP}.
See also \cite{KiFa:14,WaRo:13,PPKPT:12,KuLeRo:18} for related work.

For the case of SPDEs on
unbounded domains the effective equation is no longer an SDE,
but the reduced model is still given as an infinite dimensional  SPDE.
For details see \cite{WDK-2013-SIAM} in the case of a simple one-dimensional noise,
\cite{BlHaPa:05} for large domains and \cite{BiBlSc:17,KuLeRo:18} for
results with space-time white noise and on an unbounded domain.
Here we will focus on the case of bounded domains only.

Amplitude equations can be used to qualitatively describe
the dynamics close to a change of stability.
In \cite{DBHa:04} amplitude equations were used to give an approximation of the
infinite-dimensional invariant measure for a Swift-Hohenberg equation,
while in \cite{DBHa:04,DBHa:05,DB:book} ideas were presented that would allow
to approximate random attractors or random invariant manifolds via amplitude equations.
See also \cite{BiBlYa:16,KiFa:14} for results for other models with additive noise.

While many results for the approximation via amplitude equations
were established for additive noise, the case of multiplicative noise is not that well studied.
Only for the very special case of $G(u)=u$ and $W$ being a scalar Brownian-motion
first results for amplitude equations were obtained in \cite{DB:book}.
With this special case of noise the
approximation of random invariant manifolds was studied in \cite{DBWW:10}
In first approximation the dynamics on the dominant space is given by a variant of the amplitude equation,
while for the qualitative description of a random invariant manifold, one also needs an effective
equation on the infinite dimensional remainder. See also \cite{GuDuLi:16,SuDuLi:10}
or \cite{LiHeOn:18} using parameterizing manifolds introduced by \cite{ChLiWa:15}, see also \cite{CLPR:19}.


In the present paper our main contribution is the analysis in the case of general
infinite-dimensional multiplicative noise.
We will only treat the case with $G(0)=0$, so there is no contribution
by an additive noise, which would lead to a different scaling in $\varepsilon$ of the noise. 

Under some smoothness assumptions  on the diffusion
coefficient $G$ and regularity conditions on the noise, we derive
the amplitude equations of responding equation
\eqref{original-eqn} and show rigorously, that it
captures the essential dynamics of the
dominant modes.
We use the Taylor
expansion of $G$ in order to directly determine the errors bounds
between the solution of \eqref{original-eqn}  and that of the amplitude
equation which is only on the dominant modes.

The organization of this paper is as follows: In Section 2, we
formulate the abstract framework and some basic assumptions. Section
3 contains the main results of the paper as presented in Theorem
\ref{Theorem}. In Section 4, we give the proof of the main results.
Section 5 is devoted to a illustrative example.

\section{Setting and assumptions}
Throughout the paper, we shall work in a separable Hilbert space
$\mathcal {H}$, endowed with the usual scalar product $\langle
\cdot, \cdot\rangle$ and with the corresponding norm $\|\cdot\|$.
Concerning the leading operator $\mathcal {A}$ we shall assume the
following conditions.

\begin{assumption}[Linear Operator $\mathcal{A}$]
Suppose $\mathcal{A}$ is a self-adjoint and non-positive operator on
$\mathcal {H}$ with   eigenvalues $\{-\lambda_k\}_{k\in\mathbb{N}}$
such that $0=\lambda_1\leq \cdots \leq\lambda_k\cdots$, satisfying
$\lambda_k\geq Ck^m$ for all sufficiently large $k$, positive
constants $m$ and $C$. The associated eigenvectors
$\{e_k\}_{k=1}^\infty$ form  a complete orthonormal basis in
$\mathcal{H}$ such that $\mathcal{A}e_k=-\lambda_ke_k.$

By $\mathcal{N}$ we denote the kernel space of $\mathcal{A}$, which,
according to Assumption 1, has finite dimension $n$ with basis
$\{e_1,\cdots, e_n\}.$ By $P_c$ we denote the orthogonal projector
from $\mathcal{H}$ onto $\mathcal{N}$ with respect to the inner
product $\langle \cdot, \cdot\rangle$, and by $P_s$ the orthogonal
projector from $\mathcal{H}$ onto the orthogonal complement
$\mathcal{N}^\perp$.
\end{assumption}

One standard example is with $m=4/d$ is the Swift-Hohenberg operator
$\mathcal{A}=-(1+\Delta)^2$ on $\mathcal {H}=L^2([-\pi,\pi]^d)$ subject
to periodic boundary conditions. Similar is the Laplacian
$\Delta$ with $m=2/d$. But we could also treat more general equations and also
coupled systems of SPDEs here.

\begin{remark}
Let us remark that the setting of a Hilbert space and $\mathcal{A}$ being
a self-adjoint operator is mainly for simplicity of presentation,
as many crucial properties about the $\mathcal{H}^\alpha$-spaces defined below,
the projections $P_c$ and $P_s$
and the semigroup $e^{t\mathcal{A}}$ generated by $ \mathcal{A}$ follow
in this setting as trivial Lemmas.
Otherwise we would need to formulate them as an assumption and verify them in the given application.
\end{remark}

We can now define fractional Sobolev-spaces $\mathcal{H}^\alpha=D((1-\mathcal{A})^{\alpha/2})$ by using the domain of
definition of fractional powers of the operator $\mathcal{A}$:
\begin{definition}
For $\alpha\in\mathbb{R}$, we define the space $\mathcal{H}^\alpha$
as
\begin{eqnarray*}
\mathcal{H}^\alpha=\left\{\sum\limits_{k=1}^\infty \gamma_ke_k:
\sum\limits_{k=1}^\infty \gamma_k^2(\lambda_k+1)^{\alpha}<\infty
\right\},
\end{eqnarray*}
which is endowed with the norm
\begin{eqnarray*}
\Big\|\sum\limits_{k=1}^\infty
\gamma_ke_k\Big\|_\alpha=\Big(\sum\limits_{k=1}^\infty
\gamma_k^2(\lambda_k + 1)^{\alpha}\Big)^{\frac{1}{2}}.
\end{eqnarray*}
\end{definition}
The operator $\mathcal{A}$ generates an analytic semigroup
$\{e^{\mathcal{A}t}\}_{t\geq 0}$ on any space $\mathcal{H}^\alpha$,
 defined by $$e^{\mathcal{A}t}\Big(\sum\limits_{k=1}^\infty
\gamma_ke_k\Big)=\sum\limits_{k=1}^\infty e^{-\lambda_k,
t}\gamma_ke_k, \;t\geq0,$$ and admits the following estimate, which
is a classical property for an analytic semigroup. Its proof is straightforward and omitted here.
\begin{lemma}
Under Assumption 1, for all $\beta\leq\alpha$, $\rho\in(\lambda_n,
\lambda_{n+1}]$, there exists a constant $M>0$, which is
independent of $u\in \mathcal{H}$, such that for any $t>0$
\begin{eqnarray}
\Big\|e^{\mathcal{A}t}P_su\Big\|_\alpha\leq
Mt^{-\frac{\alpha-\beta}{m}}e^{-\rho t}\Big\|P_su\Big\|_\beta.
\label{semigroup}
\end{eqnarray}
\end{lemma}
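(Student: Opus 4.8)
The plan is to reduce the estimate, via the spectral calculus of the self-adjoint operator $\mathcal{A}$, to a single scalar bound on its eigenvalues. Since $\{e_k\}$ is an orthonormal eigenbasis and the kernel is $\mathcal{N}=\mathrm{span}\{e_1,\dots,e_n\}$ with $0=\lambda_1=\cdots=\lambda_n<\lambda_{n+1}$, the stable projection acts by $P_su=\sum_{k\ge n+1}\gamma_k e_k$ with $\gamma_k=\langle u,e_k\rangle$, and $e^{\mathcal{A}t}$ is diagonal, $e^{\mathcal{A}t}e_k=e^{-\lambda_k t}e_k$. Hence
\[\big\|e^{\mathcal{A}t}P_su\big\|_\alpha^2=\sum_{k\ge n+1}e^{-2\lambda_k t}(\lambda_k+1)^{\alpha}\gamma_k^2,\qquad \big\|P_su\big\|_\beta^2=\sum_{k\ge n+1}(\lambda_k+1)^{\beta}\gamma_k^2,\]
so the inequality is exactly the statement that the operator norm of $e^{\mathcal{A}t}P_s\colon\mathcal{H}^\beta\to\mathcal{H}^\alpha$ is controlled, i.e.
\[\sup_{k\ge n+1}e^{-\lambda_k t}(\lambda_k+1)^{(\alpha-\beta)/2}\le M\,t^{-(\alpha-\beta)/m}e^{-\rho t}.\]
Once this scalar bound holds, multiplying through by $(\lambda_k+1)^{\beta}\gamma_k^2$ and summing returns the lemma with the same constant $M$.

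For the scalar bound I would exploit the spectral gap below $\rho$. Writing $\delta=\alpha-\beta\ge0$ and factoring $e^{-\lambda_k t}=e^{-\rho t}e^{-(\lambda_k-\rho)t}$, the uniform decay $e^{-\rho t}$ is peeled off directly; and since $\lambda_k\ge\lambda_{n+1}\ge\rho$ one has $\lambda_k-\rho\ge\kappa\lambda_k$ with $\kappa=1-\rho/\lambda_{n+1}$, together with $(\lambda_k+1)\le(1+\lambda_{n+1}^{-1})\lambda_k$. The remaining factor $e^{-\kappa\lambda_k t}(\lambda_k+1)^{\delta/2}$ is then controlled by the elementary one-variable optimization $\sup_{x\ge0}x^{p}e^{-x}=(p/e)^{p}$, applied with $x=\kappa\lambda_k t$ and the appropriate exponent $p$: this converts the algebraic growth in $\lambda_k$ into precisely the negative power of $t$ on the right-hand side, uniformly in $k$.

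The main obstacle is this uniformity in $k$ combined with extracting the \emph{exact} decay rate $e^{-\rho t}$: the splitting must remove precisely the rate $\rho$ while leaving a gap $\lambda_k-\rho$ that is still a fixed positive fraction of $\lambda_k$, which is what $\kappa>0$, i.e. $\rho<\lambda_{n+1}$, guarantees (at the endpoint $\rho=\lambda_{n+1}$ the bound holds uniformly only for $t$ in bounded intervals, or one simply replaces $\rho$ by any strictly smaller value, which is harmless since it only weakens the exponential factor). I note that the polynomial growth $\lambda_k\ge Ck^m$ of Assumption 1 is not essential for this particular lemma—the supremum over the half-line $[\lambda_{n+1},\infty)$ is finite on its own—so the argument rests solely on self-adjointness, the diagonal action of the semigroup, and the scalar optimization above.
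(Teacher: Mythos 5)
The paper declares this proof ``straightforward'' and omits it entirely, so there is no written argument to compare against; judged on its own terms, your reduction is exactly the intended one --- diagonalize, peel off $e^{-\rho t}$ via $\lambda_k-\rho\ge\kappa\lambda_k$ with $\kappa=1-\rho/\lambda_{n+1}>0$, and finish with a one-variable optimization --- and your caveat about the endpoint $\rho=\lambda_{n+1}$ is a correct and worthwhile observation. The gap is in the last step, and it is genuine. Writing $\delta=\alpha-\beta$, the optimization $\sup_{y>0}y^{p}e^{-y}=(p/e)^{p}$ applied to $e^{-\kappa\lambda_k t}(\lambda_k+1)^{\delta/2}$ forces $p=\delta/2$ and yields $C(\kappa t)^{-\delta/2}$: the power of $t$ is dictated by the power of $(\lambda_k+1)$ appearing in the weight, and no choice of ``appropriate exponent $p$'' can turn it into $t^{-\delta/m}$. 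Your own closing remark --- that the growth condition $\lambda_k\ge Ck^m$ is not needed --- is the warning sign: if $m$ never enters the argument, it cannot appear in the conclusion. What your argument proves is the estimate with $t^{-(\alpha-\beta)/2}$ in place of $t^{-(\alpha-\beta)/m}$, which implies the stated inequality only when $m\le 2$ (small $t$ is the issue; large $t$ is absorbed into the exponential). For $m>2$ the stated bound is strictly stronger for small $t$ and, with the norm as defined in the paper, actually fails: for the one-dimensional Swift--Hohenberg operator ($m=4$, $\lambda_k\sim k^4$) and $\delta=1$, the supremum over $k$ of $e^{-\lambda_k t}(\lambda_k+1)^{1/2}$ is of order $t^{-1/2}$ for small $t$, since there are eigenvalues near the continuum optimizer $\lambda\approx 1/(2t)$, and $t^{-1/2}\not\le M t^{-1/4}$.

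The exponent $(\alpha-\beta)/m$ would come out of your computation verbatim if the $\mathcal{H}^\alpha$-norm carried the weight $(\lambda_k+1)^{2\alpha/m}$, i.e.\ if $\mathcal{H}^\alpha=D((1-\mathcal{A})^{\alpha/m})$; so the discrepancy is between the paper's definition of $\mathcal{H}^\alpha$ and the exponent in the lemma, not in your overall strategy. But as a proof of the statement as written, your argument does not close: you must either restrict to $m\le 2$, replace $t^{-(\alpha-\beta)/m}$ by $t^{-(\alpha-\beta)/2}$, or renormalize the spaces --- and you should say explicitly which, rather than asserting that the optimization delivers ``precisely'' the claimed power of $t$.
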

In addition,  we impose the following conditions:

\begin{assumption}[Operator $\mathcal{L}$]
  Let
$\mathcal{L}: \mathcal{H}^\alpha\rightarrow
\mathcal{H}^{\alpha-\beta}$ for some $\alpha\in \mathbb{R}$,
$\beta\in [0, m)$ be a linear
continuous mapping that commutes with $P_c$ and $P_s$.
\end{assumption}

\begin{assumption}[Nonlinearity $\mathcal{F}$] Assume
that $\mathcal{F}: (\mathcal{H}^\alpha)^3\rightarrow
\mathcal{H}^{\alpha-\beta}$, with $\alpha$ and $\beta$ as in
Assumption 2, is a trilinear, symmetric mapping and satisfies the
following conditions, for some $C>0$,

\begin{eqnarray}\label{F-condition-1}
\big\|\mathcal{F}(u,v,w)\big\|_{\alpha-\beta}\leq
C\big\|u\big\|_\alpha\big\|v\big\|_\alpha\big\|w\big\|_\alpha\quad\text{ for all }\;u,v,w\in
\mathcal{H}^\alpha.
\end{eqnarray}
Moreover, we have 
\begin{eqnarray}\label{F-condition-2}
\langle\mathcal{F}_c(u), u\rangle\leq 0 \quad\text{ for all }u\in \mathcal{N},
\end{eqnarray}
\begin{eqnarray}\label{F-condition-3}
\langle\mathcal{F}_c(u, u, w),w\rangle\leq 0 \quad\text{ for all }u, w\in \mathcal{N},
\end{eqnarray}
and for some positive constants $C_0$, $C_1$, and $C_2$ we have for all $u,v, w\in \mathcal{N}$ that
\begin{eqnarray}\label{F-condition-4}
\langle\mathcal{F}_c(u, v, w)-\mathcal{F}_c(v),u\rangle\leq
-C_0\|u\|^4+C_1\|w\|^4+C_2\|w\|^2\|v\|^2 .
\end{eqnarray}
\end{assumption}
Here, to ease notation, we use $\mathcal{F}_c:=P_s\mathcal{F}$ and we
define $\mathcal{F}_s$, $\mathcal{L}_c$ and $\mathcal{L}_s$ in a
similar way.
\begin{assumption}[Wiener Process]
 Let $U$ be a
separable Hilbert space with inner product $\langle\cdot,\cdot
\rangle_U$. Let $\{W_t\}_{t\geq0}$ be the cylindrical Wiener process on a
stochastic base $(\Omega, \mathscr{F}, \mathscr{F}_t, \mathbb{P})$
with covariance operator $Q = I$.
\end{assumption}
Formally, $W$ can be written (cf.
Da Prato and Zabczyk \cite{Daprato}) as the infinite sums
\begin{eqnarray*}
W_t=\sum\limits_{k\in\mathbb{N}}B_{k}(t)f_k,
\end{eqnarray*}
where $\{B_{k}(t)\}_{k\in \mathbb{N}}$ are mutually independent
real-valued Brownian motions on stochastic  base $(\Omega,
\mathscr{F}, \{\mathscr{F}_t\}_{t\geq0}, \mathbb{P})$, and
$\{f_k\}_{k\in\mathbb{N}}$ is any orthonormal basis on $U$.

We proceed with some further notation. Let $V$ be  another separable
Hilbert space with inner product $\langle\cdot, \cdot\rangle_V$. Let
$\mathscr{L}_2(U,V)$ denote the Hilbert space consisting of all
Hilbert-Schmidt operators from $U$ to $V$, where the inner product
is denoted by $\langle \cdot,\cdot\rangle_{\mathscr{L}_2(U,V)}$,
and the norm by $\|\cdot\|_{\mathscr{L}_2(U,V)}$.

\begin{assumption}[Operator G]
 Assume that $G:
\mathcal{H}^\alpha\rightarrow \mathscr{L}_2(U, \mathcal{H}^\alpha) $
satisfying $G(0)=0$, with $\alpha$ as in Assumption 2, is
Fr\'{e}chet differentiable up to order 2 and fulfills  the following
conditions: For one $r>0$ there exists a constant  $l_r>0$ such that
\begin{eqnarray}\label{g-condition-1}
\|G(u)\|_{\mathscr{L}_2(U, \mathcal{H}^\alpha)}\leq l_r\|u\|_\alpha,
\end{eqnarray}
\begin{eqnarray}\label{g-condition-2}
\|G'(u)\cdot v\|_{\mathscr{L}_2(U, \mathcal{H}^\alpha)}\leq l_r
\|v\|_\alpha
\end{eqnarray}
and
\begin{eqnarray}\label{g-condition-3}
\|G''(u)\cdot (v, w)\|_{\mathscr{L}_2(U, \mathcal{H}^\alpha)}\leq
l_r \|v\|_\alpha\|w\|_\alpha,
\end{eqnarray}
for all $u, v, w\in \mathcal{H}^\alpha$ with $ \|u\|_\alpha \leq r$,
where we use notations $G'(u)$ and $G''(u)$ denote the first and
second Fr\'{e}chet derivatives at point $u$, respectively.
\end{assumption}
Let us remark that the assumption on the second Frechet-derivative is only posed for simplicity of proofs
when we bound terms like $G(u)-G'(0)\cdot u$.

To give a meaning to problem \eqref{original-eqn}, we adapt the
concept of local mild solution as in \cite{Dirk-2014-JEE}.

\begin{definition}(Local mild solution)
An $\mathcal {H}^\alpha$-valued
process  $\{u(t)\}_{t\in[0, T]}$, is called a mild solution of
problem \eqref{original-eqn} if  for some stopping time $\tau_{\text{ex}}$ we
have on a set of probability $1$ that $\tau_{\text{ex}}>0$,  
$u \in
C^0([0,\tau_{\text{ex}}), \mathcal{H}^\alpha)$  and
\begin{eqnarray*}
u(t)=e^{t\mathcal{A}}u(0)+\int_0^t
e^{(t-s)\mathcal{A}}[\varepsilon^2
\mathcal{L}u(s)+\mathcal{F}(u(s))]ds+\varepsilon\int_0^t G(u(s))
dW(t)
\end{eqnarray*}
for all $t\in(0, \tau_{\text{ex}})$.
\end{definition}
The proof of the existence and the uniqueness   of a   local mild
solution is standard under our assumptions, and hence is omitted here.
For locally Lipschitz nonlinearities this follows using a cut-off
argument and Banach's fixed-point theorem.
For details see for example \cite{Daprato} or
\cite{Chow}.

Let us remark that one can choose $\tau_{\text{ex}}$ such that
with probability $1$ either $\tau_{\text{ex}}=\infty$
or $\lim_{t\nearrow\tau_{\text{ex}}}\|u(t)\|_\alpha =\infty$.

\section{Formal Derivation and the Main Result}
We consider the local mild solution $u$  on the slow time-scale $T=\varepsilon^2t$ 
and assume that it is small of order $\mathcal{O}(\varepsilon)$.
Let us split it into
\begin{eqnarray}\label{u-split}
u(t)=\varepsilon a(\varepsilon^2t)+\varepsilon \psi(\varepsilon^2t),
\end{eqnarray}
with $a\in \mathcal{N}$ and $\psi\in\mathcal{S}$.
By projecting and rescaling \eqref{original-eqn} to
the slow time scale, we obtain
\begin{eqnarray}
da(T)&=&\left[\mathcal{L}_ca(T)+\mathcal{F}_c\left(a(T)+\psi(T)\right)\right]dT \nonumber\\
&&\qquad +\frac{1}{\varepsilon} {G}_c\left(\varepsilon
a(T)+\varepsilon\psi(T)\right)d\tilde{W}(T) \label{a-equation}
\end{eqnarray}
and
\begin{eqnarray}
d\psi(T)&=&[\frac{1}{\varepsilon^2}\mathcal{A}_s\psi(T)+\mathcal{L}_s\psi(T)+\mathcal{F}_s(a(T)+\psi(T))]dT\nonumber\\
&&\qquad +\frac{1}{\varepsilon}G_s(\varepsilon
a(T)+\varepsilon\psi(T))d\tilde{W}(T),\label{psi-equation}
\end{eqnarray}
where $\tilde{W}(T):=\varepsilon W(\varepsilon^{-2}T)$ is a rescaled
version of the Wiener process. These equations can be
written in the   integral form using the mild formulation:
\begin{eqnarray}
a(T)&=&a(0)+\int_0^T\mathcal{L}_ca(\tau)d\tau+\int_0^T\mathcal{F}_c(a(\tau)+\psi(\tau))d\tau\nonumber\\
&&\qquad + \frac{1}{\varepsilon}\int_0^TG_c(\varepsilon
a(\tau)+\varepsilon\psi(\tau))d\tilde{W}_\tau
\label{a-integral-equation}
\end{eqnarray}
and
\begin{eqnarray}
\psi(T)&=&e^{\mathcal{A}_sT\varepsilon^{-2}}\psi(0)+
\int_0^Te^{\mathcal{A}_s(T-\tau)\varepsilon^{-2}}\mathcal{L}_s\psi(\tau)d\tau\nonumber\\
&&\qquad +\int_0^Te^{\mathcal{A}_s(T-\tau)\varepsilon^{-2}}\mathcal{F}_s(a(\tau)+\psi(\tau))d\tau\nonumber\\
&&\qquad +\frac{1}{\varepsilon}\int_0^Te^{\mathcal{A}_s(T-\tau)\varepsilon^{-2}}G_s(\varepsilon
a(\tau)+\varepsilon\psi(\tau))d\tilde{W}_\tau.
\label{psi-integral-equation-0}
\end{eqnarray}

We shall see later that $\psi$ is small as long as $a$ is of order one.
Thus by neglecting all $\psi$-dependent terms in \eqref{a-equation} or \eqref{a-integral-equation}
and expanding the diffusion
we obtain  the {\em amplitude equation}
\begin{eqnarray}
\label{amplitude-differential}
db(\tau)= \mathcal{L}_cb(\tau)d\tau+ \mathcal{F}_c(b(\tau))d\tau+
[G_c'(0)\cdot b(\tau)]\;d\tilde{W}_\tau,\quad b(0)=a(0).
\end{eqnarray}
This is equivalent to the integral equation
\begin{eqnarray}
 b(T)=a(0)+\int_0^T\mathcal{L}_cb(\tau)d\tau+\int_0^T\mathcal{F}_c(b(\tau))d\tau+\int_0^T
 [G_c'(0)\cdot b(\tau)]d\tilde{W}_\tau.
 \label{amplitude-integral}
\end{eqnarray}

With our main assumptions we have the following main result on the approximation
by amplitude equation, which is proved later at the end of Section
\ref{estimate}.

\begin{theorem} \label{Theorem}
Let the Assumptions 1 - 5 be satisfied and let $u$ be the local mild
solution of \eqref{original-eqn} with initial condition
\[u(0)=\varepsilon a(0)+\varepsilon\psi(0),\]
where $a(0)\in \mathcal
{N}$, $\psi(0)\in \mathcal {S}$ and $b$ is the solution of the amplitude equation 
\eqref{amplitude-differential} with $b(0)=a(0)$. 
Then for any $p>1$,
$T_0>0$ and  all small  $\kappa\in(0, \frac{1}{19}),$ there exists a
constant $C>0$ such that for 
$\|u(0)\|_\alpha\leq\varepsilon^{\kappa/3}$ 
 we have
\[
\mathbb{P}\Big( \sup\limits_{t\in [0, \varepsilon^{-2}
T_0]}\Big\|u(t)- \varepsilon b(\varepsilon^2t) -\varepsilon
Q(\varepsilon^2 t)\Big\|_\alpha >\varepsilon^{2-19\kappa}\Big)\leq
C\varepsilon^p,
\]
where
\[
Q(T)= e^{\mathcal{A}_sT\varepsilon^{-2}}\psi(0).
\]
\end{theorem}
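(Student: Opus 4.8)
The plan is to carry out the entire analysis on the slow time scale and to reduce the claim to two separate error estimates. Writing $u = \varepsilon a + \varepsilon\psi$ as in \eqref{u-split}, I would use the elementary decomposition
\[
u(t) - \varepsilon b(\varepsilon^2 t) - \varepsilon Q(\varepsilon^2 t) = \varepsilon\big(a(\varepsilon^2 t) - b(\varepsilon^2 t)\big) + \varepsilon\big(\psi(\varepsilon^2 t) - Q(\varepsilon^2 t)\big),
\]
so it is enough to bound $\|a - b\|$ and $\|\psi - Q\|_\alpha$ each by $\mathcal{O}(\varepsilon^{1-19\kappa})$ on the slow interval $[0, T_0]$, with failure probability at most $C\varepsilon^p$. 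To make all nonlinear and stochastic estimates uniform in $\varepsilon$, the argument would be localized by a stopping time $\tau^\ast$ at which $\|a\|$, $\|b\|$, or $\|\psi\|_\alpha$ first leaves a ball of radius of order $\varepsilon^{-\kappa}$; on $[0, \tau^\ast]$ the local bounds in Assumptions 3--5 are available, and the final task is to show $\mathbb{P}(\tau^\ast < T_0) \le C\varepsilon^p$.

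First I would establish $\varepsilon$-uniform moment bounds for the amplitude. Applying It\^o's formula to $\|b\|^{2q}$ for the amplitude equation \eqref{amplitude-differential} and using the dissipativity conditions \eqref{F-condition-2}--\eqref{F-condition-4} together with the linear growth bound \eqref{g-condition-2} for $G_c'(0)$, the negative quartic term controls the martingale and drift contributions and yields $\mathbb{E}\sup_{[0,T_0]}\|b\|^p \le C$; the same energy method applies to $a$ via \eqref{a-equation}. For the stable part I would work from the mild equation \eqref{psi-integral-equation-0}: subtracting $Q(T)=e^{\mathcal{A}_sT\varepsilon^{-2}}\psi(0)$ leaves exactly the three convolution integrals against the fast semigroup. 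Since on $\mathcal{N}^\perp$ the estimate \eqref{semigroup} supplies a decay factor $e^{-\rho(T-\tau)\varepsilon^{-2}}$ with $\rho > 0$, the substitution $s=(T-\tau)\varepsilon^{-2}$ turns each deterministic integral into $\mathcal{O}(\varepsilon^2)$ times a convergent integral of $s^{-\beta/m}e^{-\rho s}$, while the stochastic convolution is estimated by the Burkholder--Davis--Gundy inequality using $\|G_s(\varepsilon a+\varepsilon\psi)\|_{\mathscr{L}_2}\le l_r\varepsilon(\|a\|_\alpha+\|\psi\|_\alpha)$ from \eqref{g-condition-1} and $G(0)=0$; the $\varepsilon^{-1}$ prefactor cancels one power, and the fast time scale furnishes the remaining smallness, giving $\sup_{[0,\tau^\ast]}\|\psi - Q\|_\alpha = \mathcal{O}(\varepsilon^{1-c\kappa})$.

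Next I would obtain the error estimate for the dominant modes by subtracting \eqref{amplitude-integral} from \eqref{a-integral-equation}. The linear term contributes $\int_0^T \mathcal{L}_c(a-b)\,d\tau$, treated by Gronwall. The nonlinearity difference I would split as $[\mathcal{F}_c(a+\psi)-\mathcal{F}_c(a)] + [\mathcal{F}_c(a)-\mathcal{F}_c(b)]$; trilinearity and \eqref{F-condition-1} make the first piece $\mathcal{O}(\|\psi\|_\alpha)$ and the second locally Lipschitz in $a-b$ on $[0,\tau^\ast]$. For the diffusion I would Taylor expand $\tfrac1\varepsilon G_c(\varepsilon a+\varepsilon\psi)=G_c'(0)\cdot(a+\psi)+R$, where the second-derivative bound \eqref{g-condition-3} makes $R$ of order $\varepsilon^{1-2\kappa}$ on $[0,\tau^\ast]$; the stochastic integrand therefore equals $G_c'(0)\cdot(a-b)$ plus the already-small terms $G_c'(0)\cdot\psi$ and $R$. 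Feeding the $G_c'(0)\cdot(a-b)$ term into a stochastic Gronwall argument through the Burkholder--Davis--Gundy inequality, and combining with the $\psi$-estimate of the previous step, yields $\mathbb{E}\sup_{[0,\tau^\ast\wedge T_0]}\|a-b\|^p \le C\varepsilon^{p(1-c'\kappa)}$.

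The hard part will be closing the a priori estimate that makes the localization effective, because the slow and fast variables are genuinely coupled: $\psi$ enters the $a$-equation \eqref{a-equation} both through $\mathcal{F}_c(a+\psi)$ and through the diffusion, so one cannot bound $\|a\|$ without simultaneously controlling $\|\psi\|_\alpha$, and vice versa. This is precisely where the dissipativity structure \eqref{F-condition-4} is indispensable: in the energy inequality the negative term $-C_0\|a\|^4$ absorbs the cross contributions $C_1\|\psi\|^4+C_2\|\psi\|^2\|a\|^2$ generated by the coupling, so that the estimate closes and the moment bounds hold up to $\tau^\ast$. A further delicate point is the initial layer near $T=0$: since $\|u(0)\|_\alpha\le\varepsilon^{\kappa/3}$ only forces $\|\psi(0)\|_\alpha\le\varepsilon^{\kappa/3-1}$, the stable data may be large, which is exactly why the transient $Q$ is retained to absorb it before the fast semigroup damps it out. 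Tracking how each localization at level $\varepsilon^{-\kappa}$ propagates through the cubic nonlinearity and the stochastic terms produces the accumulated loss $19\kappa$ and the restriction $\kappa<\tfrac1{19}$, which keeps the error exponent $2-19\kappa$ above $1$ so that the approximation stays smaller than the solution itself. Finally, Markov's inequality applied to the moment bounds converts them into $\mathbb{P}(\tau^\ast<T_0)\le C\varepsilon^p$, completing the probabilistic statement.
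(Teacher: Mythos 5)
Your overall architecture matches the paper's: the same splitting $\mathcal{R}=\varepsilon(a-b)+\varepsilon(\psi-Q)$, a stopping time at radius $\varepsilon^{-\kappa}$, moment bounds on $b$ via It\^o's formula and \eqref{F-condition-2}, a Taylor expansion of $G$ around $0$, a stochastic Gronwall argument for $a-b$ using \eqref{F-condition-4}, and Chebychev at the end. However, there is one genuine gap at the technical heart of the proof: the uniform-in-time control of the stochastic convolution in the $\psi$-equation. You propose to estimate $K(T)=\frac{1}{\varepsilon}\int_0^T e^{\mathcal{A}_s(T-\tau)\varepsilon^{-2}}G_s(\varepsilon a+\varepsilon\psi)\,d\tilde{W}_\tau$ ``by the Burkholder--Davis--Gundy inequality,'' but BDG does not apply to $\sup_{T}\|K(T)\|_\alpha$ because the integrand depends on the outer time $T$ through the semigroup, so $T\mapsto K(T)$ is not a martingale. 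This is precisely why the paper first proves only an $L^p$-in-time bound on $K$ via the Hausenblas--Seidler maximal inequality for stochastic convolutions (Lemma \ref{lemma-2}), and only later upgrades to a sup-in-time bound by splitting $K=M+\int_0^Te^{\mathcal{A}_s(T-\tau)\varepsilon^{-2}}G_s'(0)\cdot b\,d\tilde{W}_\tau$ and running the factorization method on each piece (Lemmas \ref{small-stoch-integral} and \ref{lemma-10}). Without one of these devices your estimate $\sup_{[0,\tau^*]}\|\psi-Q\|_\alpha=\mathcal{O}(\varepsilon^{1-c\kappa})$ does not follow, and with it falls the bound on $\mathcal{R}$ and the removal of the stopping time.

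A second, smaller issue is your proposed direct energy estimate for $a$ from \eqref{a-equation}, in which you want the negative quartic in \eqref{F-condition-4} to absorb cross terms $C_1\|\psi\|^4+C_2\|\psi\|^2\|a\|^2$. The dissipativity conditions \eqref{F-condition-2}--\eqref{F-condition-4} are only assumed for arguments in $\mathcal{N}$, while $\psi\in\mathcal{S}$, so they cannot be invoked against $\psi$; moreover the martingale term $\frac{1}{\varepsilon}\langle a,G_c(\varepsilon a+\varepsilon\psi)\,d\tilde{W}\rangle$ would still need the smallness of $\psi$ to close. The paper sidesteps this entirely: it never estimates $a$ directly, but bounds $b$ (whose equation is decoupled and lives on $\mathcal{N}$), then bounds $a-b$ by treating all $\psi$-dependent terms as a residual $R$ that is small on $[0,\tau^*]$, and finally recovers $\|a\|\le\|a-b\|+\|b\|=\mathcal{O}(1)\ll\varepsilon^{-\kappa}$, which is what lets the stopping time be removed. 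I would recommend restructuring your closing argument along these lines rather than attempting a coupled energy estimate.
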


Let us remark that the additional term $e^{\mathcal{A}_sT\varepsilon^{-2}}\psi(0)$
in the approximation is exponentially small after any
short time of order $\varepsilon$ by the stability of the semigroup on $\mathcal{S}$.
This is an attractivity result for the space $\mathcal{N}$ and allows for slightly bigger $\psi(0)$.

Note moreover that we did not optimize the factor in front of the $\kappa$. Both
the $19\kappa$ in the final error estimate and the $-\kappa/3$ 
in the bound on the initial condition are not optimal. We use $\kappa$ mainly for 
technical reasons and think of it as being very small.

Let us finally give some remarks on straightforward extensions of the result presented here.

\begin{remark}[Other nonlinear terms]
 We could add higher order terms to the SPDE like quartic or quintic, for example.
Formally, they are of higher order and  we do not expect to change the result very much.

Quadratic nonlinear terms $B(u,u)$ are quite different. Formally, 
we obtain in the amplitude equation the additional terms 
$\frac1\varepsilon B_c(a,a)$ and $\frac2\varepsilon B_c(a,\psi)$.
So either we need to change the scaling of the equation, consider smaller noise, 
and obtain an amplitude equation with quadratic nonlinearity,
or alternatively (as  $B_c(a,a)=0$ in many applications) we have to identify the mixed term $B(a,\psi)$.
Even if $\psi$ is small of order $\mathcal{O}(\varepsilon)$, then $B(a,\psi)$ is of order $\mathcal{O}(1)$ 
and we need to identify how $\psi$ depends on $a$.

See \cite{Dirk-2014-JEE} for a discussion in the case of additive noise.
\end{remark}

 \begin{remark}[Additive noise or quadratic diffusion]
The Assumption that $G(0)=0$ is crucial for our result, as for additive noise one sees already
in the formal calculation above, that we need a different scaling.
We expect to need $\varepsilon^2 G(u)dW$ in \eqref{original-eqn} which leads to an additive noise term $G_c(0)dW$ in the amplitude equation.
The proofs and the final theorem should nevertheless be very similar.

If we assume that not only $G(0)=0$ but also $G'(0)=0$, then we expect to have $G(u)dW$ in \eqref{original-eqn}
which leads to $[G_c''(0)\cdot(b,b)]dW$ in the amplitude equation.
Again the proofs  should be similar, but for the error estimate we might
need additional assumptions on the third derivative of $G$.
\end{remark}
\section{Estimates and Proof}\label{estimate}
Before proving the main results, we need to state some technical
lemmas used later in the proof. Also, we need to introduce a stopping time  in
connection with process $(a, \psi)$. This stopping time is equivalent to a cut-off
in \eqref{original-eqn} at radius $\varepsilon^{1-\kappa}$.
Also this stopping time is the reason, why we only need local solutions for the SPDE.
\begin{definition}\label{stopping time}
For the $\mathcal{N}\times\mathcal{S}$-valued stochastic process
$(a,\psi)$ satisfying system \eqref{a-integral-equation}-
\eqref{psi-integral-equation-0} we define, 
for some time $T_0>0$ 
and small exponent $\kappa\in (0,\frac{1}{12})$, 
the stopping time $\tau^*$ as
\begin{eqnarray*}
\tau^*:=T_0\wedge\inf\left\{T>0:\|a(T)\|_\alpha>\varepsilon^{-\kappa}
\quad or \quad\|\psi(T)\|_\alpha>\varepsilon^{-\kappa}\right\}.
\end{eqnarray*}
\end{definition}
Next, we will denote by $Q(T)$, $I(T)$, $J(T)$ and $K(T)$ the
corresponding four terms arising in the right hand side of
\eqref{psi-integral-equation-0}, respectively, that is
\begin{eqnarray}
\psi(T)=Q(T)+I(T)+J(T)+K(T).
\label{e:DB1}
\end{eqnarray}

\begin{lemma}\label{lemma-1}
 Let the Assumption 1 - Assumption 5 be satisfied. For any $p>0$ and  $\tau^*$
 from the definition \ref{stopping time}, there exists a constant
 $C >0$ such that
\begin{equation}
\mathbb{E}\sup\limits_{0\leq T\leq\tau^*} \|I(T)\|_\alpha^p \leq C
\varepsilon^{2p-\kappa p}  \label{I-bound}
\end{equation}
and
\begin{equation}
\mathbb{E}\sup\limits_{0\leq T\leq\tau^*} \|J(T)\|_\alpha^p \leq C
\varepsilon^{2p-3\kappa p}. \label{J-bound}
\end{equation}
\end{lemma}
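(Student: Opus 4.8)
The plan is to treat both $I(T)$ and $J(T)$ by the same pathwise argument, since neither term contains a stochastic integral: all randomness enters only through $a$ and $\psi$, whose $\mathcal{H}^\alpha$-norms are capped by $\varepsilon^{-\kappa}$ for $\tau\le\tau^*$ by Definition \ref{stopping time}. Consequently the asserted bounds will hold $\omega$-wise for every $T\le\tau^*$, and the expectation at the end is trivial. No Burkholder--Davis--Gundy or maximal inequality is needed, and the supremum over $T$ comes for free once I have a bound uniform in $T$.

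For the $\mathcal{L}$-term $I(T)=\int_0^T e^{\mathcal{A}_s(T-\tau)\varepsilon^{-2}}\mathcal{L}_s\psi(\tau)\,d\tau$ I would apply the analytic-semigroup estimate \eqref{semigroup} to each integrand with elapsed time $(T-\tau)\varepsilon^{-2}$, smoothing from $\mathcal{H}^{\alpha-\beta}$ up to $\mathcal{H}^\alpha$ at the cost of the singular factor $((T-\tau)\varepsilon^{-2})^{-\beta/m}$:
\[
\big\|e^{\mathcal{A}_s(T-\tau)\varepsilon^{-2}}\mathcal{L}_s\psi(\tau)\big\|_\alpha
\le M\big((T-\tau)\varepsilon^{-2}\big)^{-\beta/m}e^{-\rho(T-\tau)\varepsilon^{-2}}\big\|\mathcal{L}_s\psi(\tau)\big\|_{\alpha-\beta}.
\]
By the continuity of $\mathcal{L}$ from Assumption 2 and the stopping-time bound, $\|\mathcal{L}_s\psi(\tau)\|_{\alpha-\beta}\le C\|\psi(\tau)\|_\alpha\le C\varepsilon^{-\kappa}$ for $\tau\le\tau^*$. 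The deterministic time integral is then handled by the substitution $s=(T-\tau)\varepsilon^{-2}$, which produces a prefactor $\varepsilon^2$ together with a convergent kernel integral,
\[
\int_0^T\big((T-\tau)\varepsilon^{-2}\big)^{-\beta/m}e^{-\rho(T-\tau)\varepsilon^{-2}}\,d\tau
=\varepsilon^2\int_0^{T\varepsilon^{-2}}s^{-\beta/m}e^{-\rho s}\,ds
\le\varepsilon^2\rho^{\beta/m-1}\Gamma\!\left(1-\tfrac{\beta}{m}\right),
\]
finite precisely because $\beta/m<1$ (as $\beta\in[0,m)$) and $\rho>\lambda_n\ge0$. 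Combining gives $\|I(T)\|_\alpha\le C\varepsilon^{-\kappa}\cdot\varepsilon^2=C\varepsilon^{2-\kappa}$ uniformly in $T\le\tau^*$, whence the supremum is bounded by the same constant and \eqref{I-bound} follows upon raising to the $p$-th power.

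For $J(T)$ the argument is identical, except the integrand is estimated through the trilinear bound \eqref{F-condition-1}: writing $\mathcal{F}_s(a+\psi)=P_s\mathcal{F}(a+\psi,a+\psi,a+\psi)$ and using $\|a(\tau)+\psi(\tau)\|_\alpha\le\|a(\tau)\|_\alpha+\|\psi(\tau)\|_\alpha\le 2\varepsilon^{-\kappa}$ for $\tau\le\tau^*$, one obtains $\|\mathcal{F}_s(a(\tau)+\psi(\tau))\|_{\alpha-\beta}\le C\varepsilon^{-3\kappa}$ (here $\|P_s\cdot\|_{\alpha-\beta}\le\|\cdot\|_{\alpha-\beta}$ since $P_s$ is an orthogonal projection commuting with $\mathcal{A}$). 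The same semigroup smoothing and the same $\varepsilon^2$ from the kernel integral then yield $\|J(T)\|_\alpha\le C\varepsilon^{2-3\kappa}$ pathwise for $T\le\tau^*$, hence \eqref{J-bound}.

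The only genuine subtlety, and the single point requiring care rather than computation, is the integrability of the singular kernel $s^{-\beta/m}$ near $s=0$, which is exactly what the standing hypothesis $\beta<m$ guarantees; everything else is careful bookkeeping of the $\varepsilon$-powers arising from the fast-time rescaling $(T-\tau)\varepsilon^{-2}$ in the exponent and from the a priori bounds enforced by $\tau^*$. I expect no real obstacle here, as the deterministic nature of $I$ and $J$ makes these the easiest of the four terms in \eqref{e:DB1}; the harder estimate will be the stochastic term $K(T)$, treated elsewhere.
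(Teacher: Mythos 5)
Your proposal is correct and follows essentially the same route as the paper: a pathwise bound on the integrand via the smoothing estimate \eqref{semigroup} combined with the stopping-time caps $\|\psi\|_\alpha,\|a\|_\alpha\le\varepsilon^{-\kappa}$, followed by the rescaling of the time integral that produces the factor $\varepsilon^{2}$ (the paper writes the substitution as $r=\varepsilon^{-2}\rho(T-\tau)$, splitting the power of $\varepsilon$ between the singular prefactor and the kernel integral, but the bookkeeping is identical). No gaps; the expectation is indeed trivial since the bound is uniform and deterministic on $[0,\tau^*]$.
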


\begin{proof}
By   \eqref{semigroup} and definition \ref{stopping time}, we first have for $I$
\begin{eqnarray}
&&\mathbb{E}\sup\limits_{0\leq T\leq\tau^*}\|I(T)\|_\alpha^p \nonumber \\
&&\leq \mathbb{E}\sup\limits_{0\leq
T\leq\tau^*}\left[\int_0^T\|e^{\mathcal{A}_s(T-\tau)\varepsilon^{-2}}\mathcal{L}_s\psi(\tau)\|_\alpha
d\tau\right]^p\nonumber\\
&&\leq C\varepsilon^{\frac{2\beta p}{m}}\mathbb{E}\sup\limits_{0\leq
T\leq\tau^*}\left[\int_0^Te^{-\varepsilon^{-2}\rho(T-\tau)}(T-\tau)^{-\frac{\beta}{m}}
\|\mathcal{L}_s\psi(\tau)\|_{\alpha-\beta} d\tau\right]^p\nonumber\\
&&\leq C\varepsilon^{\frac{2\beta p}{m}}\mathbb{E}\sup\limits_{0\leq
T\leq\tau^*}\left[\int_0^Te^{-\varepsilon^{-2}\rho(T-\tau)}(T-\tau)^{-\frac{\beta}{m}}
\| \psi(\tau)\|_{\alpha } d\tau\right]^p\nonumber\\
&&\leq C\varepsilon^{\frac{2\beta p}{m}}\sup\limits_{0\leq
T\leq\tau^*}
\left[\int_0^Te^{-\varepsilon^{-2}\rho(T-\tau)}(T-\tau)^{-\frac{\beta}{m}}
\varepsilon^{-\kappa} d\tau\right]^p\nonumber\\
&&\leq C\varepsilon^{2p-\kappa p}\sup\limits_{0\leq
T\leq\tau^*}\left[\int_0^{\varepsilon^{-2}\rho T}e^{-r}r^{-\frac{\beta}{m}}dr\right]^p\nonumber\\
&&\leq C\varepsilon^{2p-\kappa p},\nonumber
\end{eqnarray}
so that \eqref{I-bound} follows. In view of Assumption 3, Definition
\ref{stopping time} and \eqref{semigroup} we obtain for $J$,
\begin{eqnarray}
&&\mathbb{E}\sup\limits_{0\leq T\leq\tau^*}\|J
(T)\|_\alpha^p\nonumber \\
&&\leq \mathbb{E}\sup\limits_{0\leq
T\leq\tau^*}\left[\int_0^T\|e^{\mathcal{A}_s(T-\tau)\varepsilon^{-2}}\mathcal{F}_s(a(\tau)+\psi(\tau))\|_\alpha
d\tau\right]^p\nonumber\\
&&\leq  C\varepsilon^{\frac{2\beta
p}{m}}\mathbb{E}\sup\limits_{0\leq
T\leq\tau^*}\left[\int_0^Te^{-\varepsilon^{-2}\rho(T-\tau)}(T-\tau)^{-\frac{\beta}{m}}
\|\mathcal{F}_s(a(\tau)+\psi(\tau))\|_{\alpha-\beta} d\tau\right]^p\nonumber\\
&&\leq C\varepsilon^{\frac{2\beta p}{m}}\mathbb{E}\sup\limits_{0\leq
T\leq\tau^*}\left[\int_0^Te^{-\varepsilon^{-2}\rho(T-\tau)}(T-\tau)^{-\frac{\beta}{m}}
\| a(\tau)+\psi(\tau)\|^3_{\alpha } d\tau\right]^p\nonumber\\
&&\leq C\varepsilon^{\frac{2\beta p}{m}}\sup\limits_{0\leq
T\leq\tau^*}
\left[\int_0^Te^{-\varepsilon^{-2}\rho(T-\tau)}(T-\tau)^{-\frac{\beta}{m}}
\varepsilon^{-3\kappa} d\tau\right]^p\nonumber\\
&&\leq C\varepsilon^{2p-3\kappa p}\sup\limits_{0\leq
T\leq\tau^*}\left[\int_0^{\varepsilon^{-2}\rho T}e^{-r}r^{-\frac{\beta}{m}}dr\right]^p\nonumber\\
&&\leq C\varepsilon^{2p-3\kappa p}.\nonumber
\end{eqnarray}
The proof of Lemma \ref{lemma-1} is thus completed.
\end{proof}

While for $I$ and $J$ we immediately had uniform bounds in time,
for $K$ we first establish bounds in  $L^p([0,\tau^*],\mathcal{H}^\alpha)$. 

\begin{lemma}\label{lemma-2}
Assume the setting of Lemma \ref{lemma-1}. Then it holds for every
$p>0$ that
\begin{eqnarray}
\mathbb{E}\sup\limits_{0\leq T\leq\tau^*}\int_0^T\|K
(\tau)\|_\alpha^p d\tau \leq C_p\varepsilon^{p-\kappa
p}.\label{K-intetral-bound}
\end{eqnarray}
\end{lemma}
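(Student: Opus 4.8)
The plan is to exploit that the quantity to be estimated is monotone in the upper integration limit, so that $\sup_{0\le T\le\tau^*}\int_0^T\|K(\tau)\|_\alpha^p\,d\tau=\int_0^{\tau^*}\|K(\tau)\|_\alpha^p\,d\tau$, and then to reduce the problem to a pointwise-in-time moment bound for the stochastic convolution $K$. By Tonelli's theorem,
\[
\mathbb{E}\int_0^{\tau^*}\|K(\tau)\|_\alpha^p\,d\tau=\int_0^{T_0}\mathbb{E}\big[\mathbf{1}_{\{\tau\le\tau^*\}}\|K(\tau)\|_\alpha^p\big]\,d\tau,
\]
so it suffices to bound $\mathbb{E}[\mathbf{1}_{\{\tau\le\tau^*\}}\|K(\tau)\|_\alpha^p]$ by $C_p\varepsilon^{p-\kappa p}$ uniformly in $\tau\in[0,T_0]$. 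To turn the indicator into a genuine cut-off of the integrand, I would introduce the stopped convolution $K^*(\tau):=\frac1\varepsilon\int_0^\tau e^{\mathcal{A}_s(\tau-s)\varepsilon^{-2}}\mathbf{1}_{\{s\le\tau^*\}}G_s(\varepsilon a(s)+\varepsilon\psi(s))\,d\tilde W_s$, whose integrand is adapted since $\{s\le\tau^*\}\in\mathscr{F}_s$. On the event $\{\tau\le\tau^*\}$ the indicator is identically $1$ for all $s\le\tau$, hence $K^*(\tau)=K(\tau)$ there, and therefore $\mathbb{E}[\mathbf{1}_{\{\tau\le\tau^*\}}\|K(\tau)\|_\alpha^p]\le\mathbb{E}\|K^*(\tau)\|_\alpha^p$.

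Next I would apply the Burkholder--Davis--Gundy inequality to $K^*(\tau)$ (for $0<p<2$ first passing through $\mathbb{E}\|K^*(\tau)\|_\alpha^p\le(\mathbb{E}\|K^*(\tau)\|_\alpha^2)^{p/2}$ by Jensen), which gives
\[
\mathbb{E}\|K^*(\tau)\|_\alpha^p\le C_p\,\mathbb{E}\Big(\frac1{\varepsilon^2}\int_0^\tau\big\|e^{\mathcal{A}_s(\tau-s)\varepsilon^{-2}}\mathbf{1}_{\{s\le\tau^*\}}G_s(\varepsilon a(s)+\varepsilon\psi(s))\big\|_{\mathscr{L}_2(U,\mathcal{H}^\alpha)}^2\,ds\Big)^{p/2}.
\]
Here the crucial simplification compared with the estimates for $I$ and $J$ is that $G$ already maps into $\mathscr{L}_2(U,\mathcal{H}^\alpha)$, so no smoothing of the semigroup is needed: taking $\beta=\alpha$ in \eqref{semigroup} gives the pure decay $\|e^{\mathcal{A}_s(\tau-s)\varepsilon^{-2}}\Phi\|_{\mathscr{L}_2(U,\mathcal{H}^\alpha)}\le Me^{-\rho(\tau-s)\varepsilon^{-2}}\|\Phi\|_{\mathscr{L}_2(U,\mathcal{H}^\alpha)}$, with no integrable singularity to control. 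On $\{s\le\tau^*\}$ Definition \ref{stopping time} yields $\|\varepsilon a(s)+\varepsilon\psi(s)\|_\alpha\le2\varepsilon^{1-\kappa}\le r$ for $\varepsilon$ small, so \eqref{g-condition-1} together with the contractivity of $P_s$ gives $\|G_s(\varepsilon a(s)+\varepsilon\psi(s))\|_{\mathscr{L}_2(U,\mathcal{H}^\alpha)}\le 2l_r\varepsilon^{1-\kappa}$.

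Finally I would insert these two bounds and carry out the elementary time integral: the integrand is dominated by $C\varepsilon^{-2}e^{-2\rho(\tau-s)\varepsilon^{-2}}\varepsilon^{2-2\kappa}$, and $\int_0^\tau e^{-2\rho(\tau-s)\varepsilon^{-2}}\,ds\le\frac{\varepsilon^2}{2\rho}$ produces one power of $\varepsilon^2$, so that $\frac1{\varepsilon^2}\int_0^\tau(\cdots)\,ds\le C\varepsilon^{2-2\kappa}$. Raising to the power $p/2$ gives $\mathbb{E}\|K^*(\tau)\|_\alpha^p\le C_p\varepsilon^{p-\kappa p}$, and integrating the now $\tau$-independent bound over $[0,T_0]$ yields \eqref{K-intetral-bound}. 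The only genuinely delicate point is the passage from the indicator to the stopped integrand, i.e. justifying that replacing $G_s(\varepsilon a+\varepsilon\psi)$ by its $\tau^*$-cut-off version leaves $K$ unchanged on $\{\tau\le\tau^*\}$; everything else is a direct application of BDG together with the exponential decay of the semigroup on the stable space. Note that the harder uniform-in-time bound is postponed precisely because a pointwise moment estimate combined with Tonelli already suffices for this $L^p$-in-time statement, whereas controlling $\sup_T\|K(T)\|_\alpha$ directly would require the factorisation method.
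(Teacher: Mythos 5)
Your argument is correct. It shares with the paper the essential first reduction: both convert the pathwise supremum into an $L^p(\Omega\times[0,T_0])$ bound and both replace the outer indicator $\mathbf{1}_{\{\tau\le\tau^*\}}$ by a cut-off of the integrand of the stochastic integral (the paper writes this as $\int_0^{\tau\wedge\tau^*}=\int_0^\tau \mathbf{1}_{[0,\tau^*]}(r)\,(\cdots)\,d\tilde W_r$, which is exactly your $K^*$). Where you diverge is in the treatment of the resulting fixed-time moment $\mathbb{E}\|K^*(\tau)\|_\alpha^p$: the paper factors the semigroup as $e^{\mathcal{A}_s t}=e^{-\lambda_0 t}e^{(\mathcal{A}_s+\lambda_0)t}$ with $\lambda_0<\lambda_{n+1}$ so that $e^{(\mathcal{A}_s+\lambda_0)t}$ is a contraction on $\mathcal{S}$, and then invokes the Hausenblas--Seidler maximal inequality for stochastic convolutions, whereas you observe that once the time variable $\tau$ has been moved outside the expectation no maximal inequality is needed at all, and the ordinary Burkholder--Davis--Gundy inequality (or the It\^o isometry plus Jensen for $p<2$) combined with the pure exponential decay $\|e^{\mathcal{A}_s t}P_s\|\le Me^{-\rho t}$ from \eqref{semigroup} with $\beta=\alpha$ suffices. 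Your route is the more elementary of the two and makes transparent why the lemma is easier than the uniform-in-time bound on $K$ (which genuinely requires factorization, cf.\ Lemma \ref{small-stoch-integral}); the paper's route buys nothing extra here, since after its own manipulations it too only estimates a fixed-time moment. Both computations produce the same powers of $\varepsilon$: one factor $\varepsilon^{1-\kappa}$ from $\|G_s(\varepsilon a+\varepsilon\psi)\|_{\mathscr{L}_2}\le l_r\varepsilon\|a+\psi\|_\alpha\le C\varepsilon^{1-\kappa}$ on $[0,\tau^*]$, and one factor $\varepsilon$ from $\big(\varepsilon^{-2}\int_0^\tau e^{-2\rho(\tau-s)\varepsilon^{-2}}ds\big)^{1/2}\le C$, giving $C_p\varepsilon^{p-\kappa p}$ after integrating over $[0,T_0]$. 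The one point worth stating explicitly in a final write-up is the applicability of \eqref{g-condition-1}, which is only assumed for $\|u\|_\alpha\le r$; your remark that $\|\varepsilon a+\varepsilon\psi\|_\alpha\le 2\varepsilon^{1-\kappa}\le r$ for small $\varepsilon$ on $[0,\tau^*]$ covers this.
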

\begin{proof}
Throughout this proof let $\lambda_0$ be a
positive constant less than $\lambda_{n+1}$ but close to it. For any $p>0$, it holds
\begin{eqnarray}
&&\mathbb{E}\sup\limits_{0\leq T\leq\tau^*}\int_0^T\|K
(\tau)\|_\alpha^p
d\tau\nonumber\\
&&=\mathbb{E}\sup\limits_{0\leq
T\leq\tau^*}\int_0^T\|\frac{1}{\varepsilon}\int_0^\tau
e^{\mathcal{A}_s(\tau-r)\varepsilon^{-2}}G_s(\varepsilon
a(r)+\varepsilon\psi(r))d\tilde{W}_r\|_{\alpha}^pd\tau\nonumber\\
&&\leq \mathbb{E}\int_0^{T_0}\|1_{[0,
\tau^*]}(\tau)\frac{1}{\varepsilon}\int_0^\tau
e^{\mathcal{A}_s(\tau-r)\varepsilon^{-2}}G_s(\varepsilon
a(r)+\varepsilon\psi(r))d\tilde{W}_r\|_{\alpha}^pd\tau \nonumber\\
&&\leq\mathbb{E}\int_0^{T_0}\|\frac{1}{\varepsilon}\int_0^{\tau\wedge\tau^*}e^{\mathcal{A}_s(\tau-r)\varepsilon^{-2}}G_s(\varepsilon
a(r)+\varepsilon\psi(r))d\tilde{W}_r\|_{\alpha}^pd\tau\nonumber\\
&&=\mathbb{E}\int_0^{T_0}\|\frac{1}{\varepsilon}\int_0^{\tau}1_{[0,
\tau^*]}(r)e^{\mathcal{A}_s(\tau-r)\varepsilon^{-2}}G_s(\varepsilon
a(r)+\varepsilon\psi(r))d\tilde{W}_r\|_{\alpha}^pd\tau\nonumber\\
&&=\frac{1}{\varepsilon^p}\int_0^{T_0}e^{-\varepsilon^{-2}\lambda_0p
\tau}\mathbb{E}\| \int_0^{\tau}1_{[0,
\tau^*]}(r)e^{(\mathcal{A}_s+\lambda_0I)(\tau-r)\varepsilon^{-2}}e^{
\varepsilon^{-2}\lambda_0r} \nonumber \\
&& \qquad \qquad\qquad \qquad\qquad \qquad \qquad \qquad G_s(\varepsilon
a(r)+\varepsilon\psi(r))d\tilde{W}_r\|_{\alpha}^pd\tau.\nonumber
\end{eqnarray}
By an application of the maximal inequality for stochastic convolutions
\cite{Hausenblas} based on the Riesz-Nagy theorem
(as $\mathcal{A}_s+\lambda_0$ generates a contraction semigroup on $\mathcal{S}$),
the condition \eqref{g-condition-1} for $G$,
and the definition of $\tau^*$, we obtain
\begin{eqnarray}
&&\mathbb{E}\sup\limits_{0 \leq T\leq\tau^*}
\int_0^T\|\frac{1}{\varepsilon}\int_0^\tau
e^{\mathcal{A}_s(\tau-r)\varepsilon^{-2}}G_s(\varepsilon
a(r)+\varepsilon\psi(r))d\tilde{W}_r\|_{\alpha}^pd\tau\nonumber\\
&&\leq C\int_0^{T_0}e^{-\varepsilon^{-2}\lambda_0p
\tau}\mathbb{E}[\int_0^\tau e^{ 2\varepsilon^{-2}\lambda_0r }1_{[0,
\tau^*]}(r) \|a(r)+\psi(r)\|^2_\alpha  dr]^{\frac{p}{2}}d\tau\nonumber\\
&&\leq C\int_0^{T_0}e^{-\varepsilon^{-2}\lambda_0p
\tau}[\int_0^\tau(\varepsilon^{-2\kappa}+1) e^{
2\varepsilon^{-2}\lambda_0
r}dr]^{\frac{p}{2}}d\tau\nonumber\\
&&\leq C\varepsilon^{p-\kappa
p}\int_0^{T_0}e^{-\varepsilon^{-2}\lambda_0p  \tau}[e^{
2\varepsilon^{-2}\lambda_0 \tau}-1]^{\frac{p}{2}}d\tau\nonumber\\
&&\leq C\varepsilon^{(1-\kappa)p+2}
\nonumber
\end{eqnarray}
where the constant may change from line to line, but it mainly depends on $p$, $T_0$, the bound on $G$, and $\lambda_0$.
\end{proof}
So we have seen in the previous lemmas that $\psi$ equals $Q$ plus a small term.
Next, let us rewrite the equation \eqref{a-equation} for $a$ as the amplitude
equation plus an error term (or residual).
\[
a(T)=a(0)+\int_0^T \left[\mathcal{L}_ca(\tau)+\mathcal{F}_c(a(\tau)) \right] d\tau
+\int_0^TG_c'(0) \cdot a(\tau)d\tilde{W}_\tau+R(T),
\]
where the error term is given by
\begin{eqnarray}
R (T)&=&\int_0^T \left[3\mathcal{F}_c(a(\tau),\psi(\tau),\psi(\tau))+3\mathcal{F}_c(a(\tau),a(\tau),\psi(\tau))+ \mathcal{F}_c(
\psi(\tau))\right]d\tau \nonumber\\
&&+\int_0^T[\frac{1}{\varepsilon}G_c(\varepsilon
a(\tau)+\varepsilon\psi(\tau))-G_c'(0)\cdot a(\tau)]d\tilde{W}_\tau.
\label{Remainder}
\end{eqnarray}
Let us now start to show that $R$ is small.
\begin{lemma}\label{lemma3}
Assume the setting of Lemma \ref{lemma-1}. For any $p>0$, there
exists a constant
 $C_p>0$ such that
\[
\mathbb{E}\sup\limits_{0\leq
T\leq\tau^*}\|\int_0^T\mathcal{F}_c(a(\tau),\psi(\tau),\psi(\tau))d\tau\|_\alpha^p\leq C_p\left(\varepsilon^{2p-7\kappa p}+\|\psi(0)\|^{2p}_\alpha
\varepsilon^{2p-\kappa p} \right). 
\]
\begin{proof}
It is direct to see that by brute force expansion of the cubic using $\psi=Q+I+K+J$ from \eqref{e:DB1} that
\begin{eqnarray}
&&\int_0^T\mathcal{F}_c(a(\tau),\psi(\tau),\psi(\tau))d\tau\nonumber\\
&&=\int_0^T\mathcal{F}_c(a(\tau),Q (\tau),Q (\tau))d\tau+\int_0^T\mathcal{F}_c(a(\tau),I (\tau),I (\tau))d\tau\nonumber\\
&&+\int_0^T\mathcal{F}_c(a(\tau),J (\tau),J (\tau))d\tau+\int_0^T\mathcal{F}_c(a(\tau),K (\tau),K (\tau))d\tau\nonumber\\
&&+2\int_0^T\mathcal{F}_c(a(\tau),Q (\tau),I (\tau))d\tau+2\int_0^T\mathcal{F}_c(a(\tau),Q (\tau),J (\tau))d\tau\nonumber\\
&&+2\int_0^T\mathcal{F}_c(a(\tau),Q (\tau),K (\tau))d\tau+2\int_0^T\mathcal{F}_c(a(\tau),I (\tau),J (\tau))d\tau\nonumber\\
&&+2\int_0^T\mathcal{F}_c(a(\tau),I(\tau),K (\tau))d\tau+2\int_0^T\mathcal{F}_c(a(\tau),J (\tau),K (\tau))d\tau\nonumber\\
&&:=\sum\limits_{k=1}^{10}R ^{1,k}(T).\label{lemma3-1}
\end{eqnarray}
We will estimate each term separately, which will all be very similar, as $I$, $J$, and $K$ are small. 
Only for $Q$ we need an additional averaging argument. First since all
$\mathcal{H}^\alpha$- norms are equivalent on $\mathcal{N}$, we get
\begin{eqnarray}
\mathbb{E}\sup\limits_{0\leq
T\leq\tau^*}\|R^{1,1}(T)\|_\alpha^p&\leq&
C\mathbb{E}\sup\limits_{0\leq
T\leq\tau^*}\|R^{1,1}(T)\|_{\alpha-\beta}^p\nonumber\\
&\leq&C\mathbb{E}\sup\limits_{0\leq
T\leq\tau^*}\left[\int_0^T\|\mathcal{F}_c(a(\tau),Q (\tau),Q (\tau))\|_{\alpha-\beta}d\tau\right]^p\nonumber\\
&\leq&C\mathbb{E}\sup\limits_{0\leq T\leq\tau^*}\left[\int_0^T
\|a(\tau)\|_\alpha\|Q (\tau)\|_\alpha^2d\tau\right]^p\nonumber\\
&\leq&C\varepsilon^{-\kappa
p}\left[\int_0^{T_0}\|e^{\mathcal{A}_s\tau\varepsilon^{-2}}\psi(0)\|^2_\alpha
d\tau\right]^p\nonumber\\
&\leq& C\varepsilon^{2p-\kappa p}\|\psi(0)\|^{2p}_\alpha.\nonumber
\end{eqnarray}
For $R ^{1,2}(T)$, we have
\begin{eqnarray}
\mathbb{E}\sup\limits_{0\leq T\leq\tau^*}\|R
^{1,2}(T)\|_\alpha^p&\leq& C\mathbb{E}\sup\limits_{0\leq
T\leq\tau^*}\|R_\varepsilon^{1,2}(T)\|_{\alpha-\beta}^p\nonumber\\
&\leq&C\mathbb{E}\sup\limits_{0\leq
T\leq\tau^*}\left[\int_0^T\|\mathcal{F}_c(a(\tau),I (\tau),I (\tau))\|_{\alpha-\beta}d\tau\right]^p\nonumber\\
&\leq&C\mathbb{E}\sup\limits_{0\leq T\leq\tau^*}\left[\int_0^T\|
a(\tau)\|_\alpha \|I (\tau)\|_{\alpha}^2d\tau\right]^p\nonumber
\end{eqnarray}
Due to definition   \ref{stopping time} and \eqref{I-bound}, we get
\begin{eqnarray}
\mathbb{E}\sup\limits_{0\leq T\leq\tau^*}\|R
^{1,2}(T)\|_\alpha^p&\leq& C \varepsilon^{-\kappa
p}\mathbb{E}\sup\limits_{0\leq
T\leq\tau^*}\int_0^T\|I (\tau)\|_{\alpha}^{2p}d\tau\nonumber\\
&\leq& C \varepsilon^{4p-3\kappa p} .\nonumber
\end{eqnarray}
By proceeding with analogous arguments, we can show the following
results for all other terms:
\begin{eqnarray}
\mathbb{E}\sup\limits_{0\leq T\leq\tau^*}\|R
^{1,3}(T)\|_\alpha^p&\leq& C_p\varepsilon^{4p-7\kappa p},\nonumber
\end{eqnarray}

\begin{eqnarray}
\mathbb{E}\sup\limits_{0\leq T\leq\tau^*}\|R
^{1,4}(T)\|_\alpha^p&\leq& C_p\varepsilon^{2p-3\kappa p},\nonumber
\end{eqnarray}

\begin{eqnarray}
\mathbb{E}\sup\limits_{0\leq T\leq\tau^*}\|R
^{1,5}(T)\|_\alpha^p&\leq& C_p\varepsilon^{- \kappa p}
(\varepsilon^{2p}\|\psi(0)\|_\alpha^{2p}+\varepsilon^{4p-2\kappa
p}),\nonumber
\end{eqnarray}

\begin{eqnarray}
\mathbb{E}\sup\limits_{0\leq T\leq\tau^*}\|R
^{1,6}(T)\|_\alpha^p&\leq& C_p\varepsilon^{- \kappa p}
(\varepsilon^{2p}\|\psi(0)\|_\alpha^{2p}+\varepsilon^{4p-6\kappa
p}),\nonumber
\end{eqnarray}

\begin{eqnarray}
\mathbb{E}\sup\limits_{0\leq T\leq\tau^*}\|R
^{1,7}(T)\|_\alpha^p&\leq&  C_p\varepsilon^{- \kappa p}
(\varepsilon^{2p}\|\psi(0)\|_\alpha^{2p}+\varepsilon^{2p-2\kappa
p}),\nonumber
\end{eqnarray}

\begin{eqnarray}
\mathbb{E}\sup\limits_{0\leq T\leq\tau^*}\|R
^{1,8}(T)\|_\alpha^p&\leq&  C_p\varepsilon^{- \kappa p}
(\varepsilon^{4p-2\kappa p}+\varepsilon^{4p-6\kappa p}),\nonumber
\end{eqnarray}

\begin{eqnarray}
\mathbb{E}\sup\limits_{0\leq T\leq\tau^*}\|R
^{1,9}(T)\|_\alpha^p&\leq&  C_p\varepsilon^{- \kappa p}
(\varepsilon^{4p-2\kappa p}+\varepsilon^{2p-2\kappa p}),\nonumber
\end{eqnarray}
and
\begin{eqnarray}
\mathbb{E}\sup\limits_{0\leq T\leq\tau^*}\|R
^{1,10}(T)\|_\alpha^p&\leq&   C_p\varepsilon^{- \kappa p}
(\varepsilon^{4p-6\kappa p}+\varepsilon^{2p-2\kappa p}).\nonumber
\end{eqnarray}
Collecting all estimates for terms appearing in \eqref{lemma3-1} we
finish the proof.
\end{proof}
\end{lemma}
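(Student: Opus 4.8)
The plan is to insert the decomposition $\psi=Q+I+J+K$ from \eqref{e:DB1} into the symmetric trilinear form and expand $\mathcal{F}_c(a,\psi,\psi)$ into the ten integrals $\int_0^T\mathcal{F}_c(a,X,Y)\,d\tau$ with $X,Y\in\{Q,I,J,K\}$ (four ``diagonal'' terms and six ``mixed'' ones), then estimate each separately. The uniform reduction is the same in every case: since $\mathcal{F}_c$ takes values in the finite-dimensional space $\mathcal{N}$, on which all $\mathcal{H}^\alpha$-norms are equivalent, I would pass from $\|\cdot\|_\alpha$ to $\|\cdot\|_{\alpha-\beta}$ and apply the trilinear bound \eqref{F-condition-1} to obtain $\|\mathcal{F}_c(a,X,Y)\|_\alpha\leq C\|a\|_\alpha\|X\|_\alpha\|Y\|_\alpha$ pointwise in time. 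On $[0,\tau^*]$ the factor $\|a\|_\alpha\leq\varepsilon^{-\kappa}$ from Definition \ref{stopping time} is pulled out, reducing every term to the control of $\varepsilon^{-\kappa}\int_0^T\|X(\tau)\|_\alpha\|Y(\tau)\|_\alpha\,d\tau$.

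For the fast pieces $I$ and $J$ the uniform-in-time moment bounds \eqref{I-bound} and \eqref{J-bound} suffice: I would estimate $\int_0^T\|X\|_\alpha^2\,d\tau\leq T_0\sup_{[0,\tau^*]}\|X\|_\alpha^2$ and then take $p$-th moments, which for example produces $\varepsilon^{-\kappa p}\mathbb{E}\sup\|I\|_\alpha^{2p}\leq C\varepsilon^{4p-3\kappa p}$. The term $K$ is more delicate, because Lemma \ref{lemma-2} supplies only the integrated estimate \eqref{K-intetral-bound} and no supremum control; here I would keep the time integral and use Hölder's inequality on $[0,T_0]$ in the form $\big(\int_0^{T_0}\|K\|_\alpha^2\,d\tau\big)^p\leq T_0^{p-1}\int_0^{T_0}\|K\|_\alpha^{2p}\,d\tau$ (it is enough to treat $p\geq1$), so that \eqref{K-intetral-bound} with exponent $2p$ gives $\mathbb{E}\big(\int_0^{\tau^*}\|K\|_\alpha^2\,d\tau\big)^p\leq C\varepsilon^{2p-2\kappa p}$ and hence the purely $K$-dependent term the bound $\varepsilon^{2p-3\kappa p}$.

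The genuinely new ingredient is the treatment of $Q$. A crude supremum bound is useless since $\|Q(0)\|_\alpha=\|\psi(0)\|_\alpha$ need not be small; instead I would exploit that $Q(\tau)=e^{\mathcal{A}_s\tau\varepsilon^{-2}}\psi(0)$ decays exponentially. Taking $\beta=\alpha$ in \eqref{semigroup} and integrating the square yields the averaging gain $\int_0^{T_0}\|Q(\tau)\|_\alpha^2\,d\tau\leq C\varepsilon^2\|\psi(0)\|_\alpha^2$, which is the source of the factor $\|\psi(0)\|_\alpha^{2p}\varepsilon^{2p-\kappa p}$ in the claim. For the mixed terms $Q$--$X$ I would combine this with Cauchy--Schwarz in time, $\int_0^T\|Q\|_\alpha\|X\|_\alpha\,d\tau\leq\big(\int_0^T\|Q\|_\alpha^2\big)^{1/2}\big(\int_0^T\|X\|_\alpha^2\big)^{1/2}$, followed by Young's inequality $ab\leq\frac12 a^2+\frac12 b^2$ to decouple the $\|\psi(0)\|_\alpha$-contribution from the fast piece; the same pointwise Young splitting handles the remaining mixed terms $I$--$J$, $I$--$K$ and $J$--$K$.

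Finally I would collect the ten estimates. Since $0<\varepsilon<1$, the two slowest-decaying contributions dominate: the factor $\|\psi(0)\|_\alpha^{2p}\varepsilon^{2p-\kappa p}$ produced by the diagonal $Q$--$Q$ term (and by the $Q$-parts of the mixed terms), and the worst deterministic exponent $\varepsilon^{2p-3\kappa p}$, which is crudely absorbed into $\varepsilon^{2p-7\kappa p}$; all other pieces carry strictly higher powers of $\varepsilon$ and are dominated. This yields the stated bound. I expect the main obstacle to be exactly the bookkeeping around $K$ and $Q$: one must avoid bounding them in supremum norm and instead carry the time integrals through Hölder and Cauchy--Schwarz, with the $\varepsilon^2$-averaging gain for $Q$ being the crucial mechanism that keeps the whole estimate at order $\varepsilon^{2p}$ up to the $\kappa$-losses.
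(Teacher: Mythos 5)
Your proposal is correct and follows essentially the same route as the paper: the identical ten-term expansion of the trilinear form, the norm-equivalence reduction on $\mathcal{N}$ with the stopping-time bound on $\|a\|_\alpha$, the $\varepsilon^2$-averaging gain from the exponential decay of $Q$, the sup-in-time bounds for $I,J$, and the integrated ($L^p$ in time, via H\"older) treatment of $K$. The only difference is that you spell out the Cauchy--Schwarz/Young decoupling for the mixed and $K$-terms which the paper dismisses as ``analogous arguments,'' and your resulting exponents match the paper's term-by-term.
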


By the same arguments which we used to derive Lemma \ref{lemma3}, we are able to
achieve following results:
\begin{lemma}\label{lemma4}
Assume the setting of Lemma \ref{lemma-1}. For any $p>0$, there
exists a constant $C >0$ such that
\begin{eqnarray*}
\mathbb{E}\sup\limits_{0\leq
T\leq\tau^*}\|\int_0^T\mathcal{F}_c(a(\tau),a(\tau),\psi(\tau))d\tau\|_\alpha^p\leq
C_p\left(\varepsilon^{ p-5\kappa
p}+\|\psi(0)\|_\alpha^{p}\varepsilon^{ 2p-2\kappa
p}\right).\label{lemma4-bound}
\end{eqnarray*}
\end{lemma}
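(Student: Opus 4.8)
The plan is to mirror the proof of Lemma \ref{lemma3}, but the computation is substantially shorter because the integrand $\mathcal{F}_c(a,a,\psi)$ contains only a single factor of $\psi$. First I would substitute the decomposition $\psi = Q + I + J + K$ from \eqref{e:DB1} and use trilinearity to split the integral into exactly four pieces,
\[
\int_0^T\mathcal{F}_c(a(\tau),a(\tau),\psi(\tau))\,d\tau = \sum_{k=1}^4 R^{2,k}(T),
\]
with $R^{2,1},\dots,R^{2,4}$ carrying $Q,I,J,K$, respectively. Since $\mathcal{F}_c=P_c\mathcal{F}$ takes values in the finite-dimensional space $\mathcal{N}$, on which all $\mathcal{H}^\alpha$-norms are equivalent, each piece may be estimated in $\|\cdot\|_{\alpha-\beta}$ and converted back to $\|\cdot\|_\alpha$; combined with the trilinear bound \eqref{F-condition-1} this yields the pointwise estimate $\|\mathcal{F}_c(a,a,v)\|_{\alpha-\beta}\leq C\|a\|_\alpha^2\|v\|_\alpha$, and on $[0,\tau^*]$ the stopping-time definition \ref{stopping time} lets me replace $\|a\|_\alpha^2$ by $\varepsilon^{-2\kappa}$.

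For the three pieces involving $I$, $J$, $K$ I would invoke the bounds already proved. For $I$ and $J$, using $\int_0^T\|\cdot\|_\alpha\,d\tau\leq T_0\sup_{[0,\tau^*]}\|\cdot\|_\alpha$ together with \eqref{I-bound} and \eqref{J-bound} gives contributions of order $\varepsilon^{2p-3\kappa p}$ and $\varepsilon^{2p-5\kappa p}$. For $K$, which only carries the $L^p$-in-time bound \eqref{K-intetral-bound}, I would apply H\"older's inequality (for $p\geq 1$, the range $0<p<1$ following by Jensen) to write
\[
\Big(\int_0^T\|K(\tau)\|_\alpha\,d\tau\Big)^p\leq T_0^{p-1}\int_0^T\|K(\tau)\|_\alpha^p\,d\tau
\]
before taking the supremum and expectation, producing a contribution of order $\varepsilon^{p-3\kappa p}$. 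All three are dominated by the claimed $\varepsilon^{p-5\kappa p}$, so that no optimization of the $\kappa$-exponent is attempted.

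The one piece requiring the averaging idea flagged in Lemma \ref{lemma3} is the $Q$-term, and this is where I would exercise the most care. Rather than bounding $\|Q\|_\alpha$ by the stopping time, I would exploit the exponential decay of the semigroup on $\mathcal{S}$: from \eqref{semigroup} with $\beta=\alpha$ one has $\|Q(\tau)\|_\alpha\leq Me^{-\rho\tau\varepsilon^{-2}}\|\psi(0)\|_\alpha$, so that $\int_0^{T_0}\|Q(\tau)\|_\alpha\,d\tau\leq C\varepsilon^2\|\psi(0)\|_\alpha$; multiplying by the $\varepsilon^{-2\kappa}$ coming from $\|a\|_\alpha^2$ and taking $p$-th powers gives the term $\varepsilon^{2p-2\kappa p}\|\psi(0)\|_\alpha^p$. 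Collecting the four contributions and absorbing the smaller powers of $\varepsilon$ into $\varepsilon^{p-5\kappa p}$ completes the estimate. I expect no genuine obstacle, since everything reduces to the established Lemmas \ref{lemma-1} and \ref{lemma-2}; the only delicate points are the H\"older step converting the $L^p$-bound on $K$ into a bound on $\int\|K\|_\alpha\,d\tau$, and the gain of the factor $\varepsilon^2$ from integrating the fast-decaying semigroup in the $Q$-term, which is precisely what produces the $\|\psi(0)\|_\alpha^p$-dependence appearing in the statement.
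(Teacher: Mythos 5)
Your proposal is correct and follows exactly the route the paper intends: the paper omits the proof of Lemma \ref{lemma4}, stating only that it follows ``by the same arguments'' as Lemma \ref{lemma3}, and your argument is precisely that method — decompose $\psi=Q+I+J+K$, use the stopping time to control $\|a\|_\alpha^2$ by $\varepsilon^{-2\kappa}$, handle $I$, $J$, $K$ via Lemmas \ref{lemma-1} and \ref{lemma-2} (with the H\"older step for $K$), and gain the factor $\varepsilon^2\|\psi(0)\|_\alpha$ for the $Q$-term by integrating the exponentially decaying semigroup. The resulting exponents ($\varepsilon^{2p-3\kappa p}$, $\varepsilon^{2p-5\kappa p}$, $\varepsilon^{p-3\kappa p}$, and $\varepsilon^{2p-2\kappa p}\|\psi(0)\|_\alpha^p$) are all dominated by the claimed bound, so the proof is complete as written.
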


\begin{lemma}\label{lemma5}
Assume the setting of Lemma \ref{lemma-1}. For any $p>0$, there
exists a constant $C >0$ such that
\begin{eqnarray}
\mathbb{E}\sup\limits_{0\leq T\leq\tau^*}\|\int_0^T\mathcal{F}_c(
\psi(\tau))d\tau\|_\alpha^p\leq C_p\left(\varepsilon^{3p-9\kappa
p}+\|\psi(0)\|_\alpha^{3p}\varepsilon^{2p}\right).\label{lemma5-bound}
\end{eqnarray}
\begin{proof}

As we noticed before,  all norms in  finite dimensional space
$\mathcal{N}$ are equivalent. Thanks to \eqref{F-condition-1}, we
get
\begin{eqnarray*}
\|\mathcal{F}_c(\psi(\tau))\|_\alpha&\leq& C\left(
\|Q(\tau)+I(\tau)+J(\tau)+K(\tau)\|_\alpha^3\right)\\
&\leq&
C\left(\|Q(\tau)\|_\alpha^3+\|I(\tau)\|_\alpha^3+\|J(\tau)\|_\alpha^3+\|K(\tau)\|_\alpha^3\right).
\end{eqnarray*}
Thus, according   to the H\"{o}lder inequality, this implies
\begin{eqnarray*}
&&\mathbb{E}\sup\limits_{0\leq T\leq\tau^*}\|\int_0^T\mathcal{F}_c(
\psi(\tau))d\tau\|_\alpha^p\\
&&\leq C_p\mathbb{E}\sup\limits_{0\leq
T\leq\tau^*}\left[\int_0^T\|Q(\tau)\|_\alpha^3d\tau\right]^p\\
&&\quad+C_p\mathbb{E}\sup\limits_{0\leq
T\leq\tau^*}\left\{\int_0^T\big[\|I(\tau)\|_\alpha^{3p}+\|J(\tau)\|_\alpha^{3p}+\|K(\tau)\|_\alpha^{3p}\big]d\tau\right\}.
\end{eqnarray*}
It is easy to check that the first term appearing in the right side
of above inequality is bounded by
$C_p\varepsilon^{2p}\|\psi(0)\|_\alpha^{3p}$ for a constant $C_p>0$.
Due to Lemma \ref{lemma-1} and Lemma \ref{lemma-2} we can conclude
that the second term is bounded by $C_p\varepsilon^{3p-9\kappa p}$.
Therefore, we finish the proof and obtain \eqref{lemma5-bound}.
\end{proof}
\end{lemma}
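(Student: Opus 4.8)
The plan is to reduce the vector-valued cubic to a scalar cube, split $\psi$ into its four constituents $Q,I,J,K$ from \eqref{e:DB1}, and treat the deterministic decaying piece $Q$ separately from the three small pieces $I,J,K$. Since $\mathcal{F}_c(\psi(\tau))$ takes values in the finite-dimensional kernel $\mathcal{N}$, all $\mathcal{H}^\alpha$-norms on it are equivalent, so by \eqref{F-condition-1} I obtain the pointwise-in-time bound $\|\mathcal{F}_c(\psi(\tau))\|_\alpha \le C\|\psi(\tau)\|_\alpha^3$. Applying the triangle inequality to the time integral and the elementary inequality $\|Q+I+J+K\|_\alpha^3 \le C(\|Q\|_\alpha^3+\|I\|_\alpha^3+\|J\|_\alpha^3+\|K\|_\alpha^3)$, I split the target into a $Q$-contribution and an $(I,J,K)$-contribution.

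For the $(I,J,K)$-part I apply H\"older in time (exponents $p$ and $p/(p-1)$) to pass from $\big[\int_0^T\|\cdot\|_\alpha^3\,d\tau\big]^p$ to $T_0^{p-1}\int_0^T\|\cdot\|_\alpha^{3p}\,d\tau$, so that the earlier integral bounds apply directly. Bounding the integral by $T_0$ times the supremum, Lemma \ref{lemma-1} (with $p$ replaced by $3p$) controls $\int_0^{\tau^*}\|I\|_\alpha^{3p}\,d\tau$ and $\int_0^{\tau^*}\|J\|_\alpha^{3p}\,d\tau$, while Lemma \ref{lemma-2} (with $p$ replaced by $3p$) controls $\int_0^{\tau^*}\|K\|_\alpha^{3p}\,d\tau$ directly. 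This is exactly why $K$ must be organized as an $L^{3p}$-in-time quantity rather than a supremum, since no uniform-in-time bound for $K$ is available. The resulting powers are $\varepsilon^{6p-3\kappa p}$ from $I$, $\varepsilon^{6p-9\kappa p}$ from $J$, and $\varepsilon^{3p-3\kappa p}$ from $K$; each is dominated by $\varepsilon^{3p-9\kappa p}$ for $\varepsilon\in(0,1)$, giving the first term in \eqref{lemma5-bound}.

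For the $Q$-contribution I use the exponential stability of the semigroup on $\mathcal{S}$. By \eqref{semigroup} with $\beta=\alpha$ one has $\|Q(\tau)\|_\alpha \le M e^{-\rho\tau\varepsilon^{-2}}\|\psi(0)\|_\alpha$, so integrating the cube yields $\int_0^T\|Q(\tau)\|_\alpha^3\,d\tau \le M^3\|\psi(0)\|_\alpha^3\int_0^\infty e^{-3\rho\tau\varepsilon^{-2}}\,d\tau = C\varepsilon^2\|\psi(0)\|_\alpha^3$, uniformly in $T$. Raising to the $p$-th power produces the term $C\varepsilon^{2p}\|\psi(0)\|_\alpha^{3p}$, and since $\psi(0)$ is a deterministic initial datum no further expectation is required. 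Collecting the two contributions gives \eqref{lemma5-bound}.

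I expect the only genuine subtlety to be the bookkeeping that extracts the gain of $\varepsilon^2$ from the $Q$-term: the decay rate is accelerated to $\rho\varepsilon^{-2}$ by the fast time scale, and it is precisely the time-integration against this rate that converts the otherwise order-one initial datum into the small prefactor $\varepsilon^2$. The remaining effort is routine, the one structural point worth stressing being the asymmetric treatment of $K$ versus $I$ and $J$, forced by the fact that Lemma \ref{lemma-2} supplies only an averaged-in-time estimate for the stochastic convolution $K$.
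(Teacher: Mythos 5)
Your proposal matches the paper's own proof essentially step for step: the same reduction $\|\mathcal{F}_c(\psi)\|_\alpha\le C\|\psi\|_\alpha^3$ via norm equivalence on $\mathcal{N}$ and \eqref{F-condition-1}, the same splitting into $Q$ versus $I,J,K$, H\"older in time to pass to $3p$-th powers so that Lemmas \ref{lemma-1} and \ref{lemma-2} apply (with the averaged-in-time treatment of $K$, exactly as the paper intends), and the same explicit computation $\int_0^T\|Q(\tau)\|_\alpha^3\,d\tau\le C\varepsilon^2\|\psi(0)\|_\alpha^3$ from the semigroup decay. The exponent bookkeeping ($\varepsilon^{6p-3\kappa p}$, $\varepsilon^{6p-9\kappa p}$, $\varepsilon^{3p-3\kappa p}$, all dominated by $\varepsilon^{3p-9\kappa p}$) is correct, so the proof is sound and is the paper's argument made slightly more explicit.
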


\begin{lemma}\label{lemma6}
Assume the setting of Lemma \ref{lemma-1}. For any $p>0$, there
exists a constant $C_p>0$ such that
\begin{eqnarray}
\mathbb{E}\sup\limits_{0\leq T\leq\tau^*}\|
\int_0^T[\frac{1}{\varepsilon}G_c(\varepsilon
a(\tau)+\varepsilon\psi(\tau))-G_c'(0)\cdot
a(\tau)]d\tilde{W}_\tau\|_\alpha^p\leq C_p\varepsilon^{p-3\kappa
p}.\nonumber\label{lemma6-bound}
\end{eqnarray}
\begin{proof}
Using Burkholder-Davis-Gundy inequality, we have
\begin{eqnarray}
&&\!\!\!\!\!\!\!\!\!\!\!\!\!\!\!\mathbb{E}\sup\limits_{0\leq
T\leq\tau^*}\| \int_0^T[\frac{1}{\varepsilon}G_c(\varepsilon
a(\tau)+\varepsilon\psi(\tau))-G_c'(0)\cdot
a(\tau)]d\tilde{W}_\tau\|_\alpha^p\nonumber \\
&&\!\!\!\!\!\!\!\!\!\!\!\!\!\!\!\leq
C\mathbb{E}\left[\int_0^{T_0}1_{[0,\tau*]}(\tau)\|\frac{1}{\varepsilon}G_c(\varepsilon
a(\tau)+\varepsilon\psi(\tau))-G_c'(0)\cdot
a(\tau)\|^2_{\mathscr{L}_2(U, \mathcal{H}^\alpha)}d\tau\right]^{\frac{p}{2}}\nonumber\\
&&\!\!\!\!\!\!\!\!\!\!\!\!\!\!\!\leq
C\mathbb{E}\left[\int_0^{T_0}1_{[0,\tau*]}(\tau)\|\frac{1}{\varepsilon}
G(\varepsilon a(\tau)+\varepsilon\psi(\tau))- G'(0)\cdot
a(\tau)\|^2_{\mathscr{L}_2(U,
\mathcal{H}^\alpha)}d\tau\right]^{\frac{p}{2}}. \label{Lemma2.6-1}
\end{eqnarray}
By using the Taylor formula, we obtain
\begin{eqnarray}
&&\frac{1}{\varepsilon}  G(\varepsilon
a(\tau)+\varepsilon\psi(\tau))-
G'(0)\cdot a(\tau)\nonumber\\
&&=\frac{1}{\varepsilon}[G(0)+G'(0)\cdot(\varepsilon
a(\tau)+\varepsilon\psi(\tau))\nonumber
\\
&&+\frac{1}{2}G''(z(\tau))\cdot(\varepsilon
a(\tau)+\varepsilon\psi(\tau),\varepsilon
a(\tau)+\varepsilon\psi(\tau))] -G'(0)\cdot a(\tau)\nonumber\\
 &&= G'(0)\cdot \psi(\tau)+\frac{\varepsilon}{2}G''(z(\tau))\cdot (
a(\tau)+ \psi(\tau),  a(\tau)+ \psi(\tau)),\nonumber
\end{eqnarray}
where $z(\tau)$ is a vector on the line segment connecting $0$ and
$\varepsilon a(\tau)+\varepsilon\psi(\tau)$. Now, as a consequence
of the condition \eqref{g-condition-3}, we have
\begin{eqnarray}
\lefteqn{\|\frac{1}{\varepsilon}  G(\varepsilon
a(\tau)+\varepsilon\psi(\tau))-
G'(0)\cdot a(\tau)\|_{L_2^0}^2}\nonumber\\
&& \qquad \qquad \leq C\|\psi\|_\alpha^2
+C\varepsilon^2\|a(\tau)\|_\alpha^4+C\varepsilon^2\|\psi(\tau)\|_\alpha^4.\nonumber
\end{eqnarray}
Therefore, if we plug the estimate above   into \eqref{Lemma2.6-1},
we get
\begin{eqnarray}
&&\mathbb{E}\sup\limits_{0\leq T\leq\tau^*}\|
\int_0^T[\frac{1}{\varepsilon}G_c(\varepsilon
a(\tau)+\varepsilon\psi(\tau))-G_c'(0)\cdot
a(\tau)]d\tilde{W}_\tau\|_\alpha^p\nonumber \\
&&\leq
C\mathbb{E}\left[\int_0^{T_0}1_{[0,\tau*]}(\tau)(\|\psi(\tau)\|^2_\alpha+\varepsilon^2\|a(\tau)\|_\alpha^4
+\varepsilon^2\|\psi(\tau)\|_\alpha^4)d\tau\right]^{\frac{p}{2}}\nonumber\\
&&\leq C_p\varepsilon^{p-2\kappa
p}+C_p\mathbb{E}\left[\int_0^{T_0}1_{[0,\tau*]}(\tau)\|\psi(\tau)\|^2_\alpha
d\tau\right]^{\frac{p}{2}},\nonumber
\end{eqnarray}
where   the last estimate following from the definition of $\tau^*$.
From the expression for $\psi(\tau)$ and H\"{o}lder's inequality, we
get
\begin{eqnarray}
\lefteqn{\mathbb{E}\left[\int_0^{T_0}1_{[0,\tau*]}(\tau)\|\psi(\tau)\|^2_\alpha
d\tau\right]^{\frac{p}{2}}
} 
\nonumber \\
&&\leq C_p\mathbb{E}\left[\int_0^{T_0}1_{[0,\tau*]}(\tau)\big(\|Q(\tau)\|^2_\alpha
+\|I(\tau)\|^2_\alpha+\|J(\tau)\|^2_\alpha
+\|K(\tau)\|^2_\alpha\big)d\tau\right]^{\frac{p}{2}}
\nonumber\\
&&\leq C_p \mathbb{E} \Big[ \varepsilon^p\|\psi(0)\|_\alpha^p+
\sup\limits_{0\leq\tau\leq\tau^*}\|I(\tau)\|^p_\alpha+
 \sup\limits_{0\leq\tau\leq\tau^*}\|J(\tau)\|^p_\alpha
+  \int_0^{\tau^*}\!\|K(\tau)\|_\alpha^p d\tau \Big].
\nonumber
\end{eqnarray}
Recalling Lemma \ref{lemma-1} and Lemma \ref{lemma-2}, we thus have
\[
\mathbb{E}\sup\limits_{0\leq T\leq\tau^*}\|
\int_0^T[\frac{1}{\varepsilon}G_c(\varepsilon
a(\tau)+\varepsilon\psi(\tau))-G_c'(0)\cdot
a(\tau)]d\tilde{W}_\tau\|_\alpha^p
\leq C_p\varepsilon^{p-3\kappa p}.
\]

\end{proof}
\end{lemma}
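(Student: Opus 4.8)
The plan is to apply the Burkholder--Davis--Gundy (BDG) inequality to the martingale and then control the Hilbert--Schmidt norm of its integrand by a second-order Taylor expansion of $G$ at the origin. First I would use BDG to pass from the $p$-th moment of the running supremum to $C\,\mathbb{E}\big[\int_0^{\tau^*}\|\tfrac1\varepsilon G_c(\varepsilon a+\varepsilon\psi)-G_c'(0)\cdot a\|_{\mathscr{L}_2(U,\mathcal{H}^\alpha)}^2\,d\tau\big]^{p/2}$. Since $G_c=P_cG$ and $P_c$ has operator norm one, I may drop the projection and estimate the integrand through the full operator $G$, which is what makes the Taylor expansion clean.

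Because $G(0)=0$, expanding to second order with remainder on the segment $z$ between $0$ and $\varepsilon(a+\psi)$ gives the identity
\[
\frac1\varepsilon G(\varepsilon a+\varepsilon\psi)-G'(0)\cdot a = G'(0)\cdot\psi + \frac\varepsilon2\,G''(z)\cdot(a+\psi,\,a+\psi).
\]
On $[0,\tau^*]$ one has $\|\varepsilon(a+\psi)\|_\alpha\leq 2\varepsilon^{1-\kappa}\leq r$ for small $\varepsilon$, so the local bounds \eqref{g-condition-2} and \eqref{g-condition-3} apply and yield the pointwise estimate
\[
\Big\|\frac1\varepsilon G(\varepsilon a+\varepsilon\psi)-G'(0)\cdot a\Big\|_{\mathscr{L}_2(U,\mathcal{H}^\alpha)}^2 \leq C\|\psi\|_\alpha^2 + C\varepsilon^2\|a\|_\alpha^4 + C\varepsilon^2\|\psi\|_\alpha^4.
\]

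Inserting this into the BDG bound, the two quartic contributions are immediate: on $[0,\tau^*]$ both $\|a\|_\alpha^4$ and $\|\psi\|_\alpha^4$ are at most $\varepsilon^{-4\kappa}$, so after integrating in time and raising to the power $p/2$ they produce $C\varepsilon^{(2-4\kappa)p/2}=C\varepsilon^{p-2\kappa p}$. The substantive term is $\mathbb{E}\big[\int_0^{\tau^*}\|\psi\|_\alpha^2\,d\tau\big]^{p/2}$, and here I would feed in the splitting $\psi=Q+I+J+K$ from \eqref{e:DB1} and dispatch the four pieces with the estimates already available. For $Q=e^{\mathcal{A}_s\tau\varepsilon^{-2}}\psi(0)$ the semigroup decay \eqref{semigroup} (with $\beta=\alpha$) gives $\int_0^{T_0}\|Q\|_\alpha^2\,d\tau\leq C\varepsilon^2\|\psi(0)\|_\alpha^2$, which after using $\|\psi(0)\|_\alpha\leq\varepsilon^{-\kappa}$ (forced by $\tau^*$ at $T=0$) contributes $C\varepsilon^{p-\kappa p}$; for $I$ and $J$ I bound $\int\|\cdot\|_\alpha^2\leq T_0\sup\|\cdot\|_\alpha^2$ and apply Lemma \ref{lemma-1}; for $K$ I use $(\int\|K\|_\alpha^2)^{p/2}\leq C\int\|K\|_\alpha^p$ (Hölder for $p\geq2$, smaller $p$ by Jensen) together with Lemma \ref{lemma-2}, giving $C\varepsilon^{p-\kappa p}$.

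I expect the $\psi$-term, and within it the stochastic convolution $K$, to be the main obstacle: a crude supremum-in-time bound on $K$ would be too weak, which is precisely why Lemma \ref{lemma-2} was stated as an $L^p$-in-time estimate, and the Hölder conversion $(\int\|K\|_\alpha^2)^{p/2}\leq C\int\|K\|_\alpha^p$ is the device that lets me exploit it; harvesting the decaying factor $\varepsilon^2$ from the $Q$-integral via \eqref{semigroup} is the other delicate point. Collecting the pieces, the largest power among $\varepsilon^{p-2\kappa p}$, $\varepsilon^{p-\kappa p}$ and $\varepsilon^p\|\psi(0)\|_\alpha^p$ is $\varepsilon^{p-2\kappa p}$, and since $\varepsilon^{p-2\kappa p}\leq\varepsilon^{p-3\kappa p}$ for $\varepsilon<1$, the claimed bound $C_p\varepsilon^{p-3\kappa p}$ follows with a full $\kappa p$ of slack.
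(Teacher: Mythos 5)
Your proposal is correct and follows essentially the same route as the paper: BDG inequality, dropping the projection $P_c$, a second-order Taylor expansion of $G$ at $0$ yielding the pointwise bound $C\|\psi\|_\alpha^2+C\varepsilon^2\|a\|_\alpha^4+C\varepsilon^2\|\psi\|_\alpha^4$, and then the splitting $\psi=Q+I+J+K$ with the semigroup decay for $Q$, the sup-in-time bounds of Lemma \ref{lemma-1} for $I,J$, and the H\"older conversion to the $L^p$-in-time bound of Lemma \ref{lemma-2} for $K$. Your bookkeeping of the resulting powers of $\varepsilon$ also matches the paper's, with the dominant term $\varepsilon^{p-2\kappa p}$ absorbed into the stated $\varepsilon^{p-3\kappa p}$.
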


Due to  Lemmas \ref{lemma3}, \ref{lemma4}, \ref{lemma5},
and \ref{lemma6}, we readily obtain the following estimate for
the remainder $R$ defined in \eqref{Remainder}.
\begin{lemma}\label{lemma-7}
Assume the setting of  Lemma \ref{lemma-1} and suppose furthermore
that $\|\psi(0)\|_\alpha\leq\varepsilon^{-\frac{1}{3}\kappa}$. 
Then for
any $p>0$, there exists a constant $C_p>0$ such that
\begin{eqnarray}
\mathbb{E}\sup\limits_{0\leq
T\leq\tau^*}\|R(T)\|_\alpha^p&\leq&C_p\varepsilon^{p-9\kappa
p}.\label{lemma-7-bound}
\end{eqnarray}
\end{lemma}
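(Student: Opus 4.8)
The plan is to assemble Lemma~\ref{lemma-7} directly from the four estimates already established in Lemmas~\ref{lemma3}--\ref{lemma6}, since together they cover every term in the remainder $R$ from \eqref{Remainder}. First I would recall that $R(T)$ is the sum of a drift part built from the three cubic contributions $\mathcal{F}_c(a,\psi,\psi)$, $\mathcal{F}_c(a,a,\psi)$ and $\mathcal{F}_c(\psi)$ (with harmless numerical factors $3$, $3$, $1$) and one stochastic-convolution part. Applying the triangle inequality in $\mathcal{H}^\alpha$ together with the elementary convexity bound $\|\sum_i x_i\|_\alpha^p \le C_p \sum_i \|x_i\|_\alpha^p$, the numerical factors are absorbed into $C_p$ and one obtains
\[
\mathbb{E}\sup_{0\le T\le\tau^*}\|R(T)\|_\alpha^p \le C_p\Big( A_1 + A_2 + A_3 + A_4\Big),
\]
where $A_1,A_2,A_3,A_4$ denote, respectively, the four left-hand sides bounded in Lemmas~\ref{lemma3}, \ref{lemma4}, \ref{lemma5} and \ref{lemma6}.

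Next I would substitute the four right-hand sides and then invoke the standing hypothesis $\|\psi(0)\|_\alpha \le \varepsilon^{-\kappa/3}$ to clear every occurrence of the initial datum, turning each mixed term into a pure power of $\varepsilon$. Concretely, $\|\psi(0)\|_\alpha^{2p}\varepsilon^{2p-\kappa p}\le \varepsilon^{2p-5\kappa p/3}$ in Lemma~\ref{lemma3}, $\|\psi(0)\|_\alpha^{p}\varepsilon^{2p-2\kappa p}\le\varepsilon^{2p-7\kappa p/3}$ in Lemma~\ref{lemma4}, and $\|\psi(0)\|_\alpha^{3p}\varepsilon^{2p}\le\varepsilon^{2p-\kappa p}$ in Lemma~\ref{lemma5}. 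After this substitution the bound for $R$ is a finite sum of powers $\varepsilon^{q}$ with exponents drawn from
\[
\{\,2p-7\kappa p,\; 2p-\tfrac{5}{3}\kappa p,\; p-5\kappa p,\; 2p-\tfrac{7}{3}\kappa p,\; 3p-9\kappa p,\; 2p-\kappa p,\; p-3\kappa p\,\}.
\]

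Finally I would collect these terms by passing to the smallest exponent: for $\varepsilon\in(0,1)$ a power $\varepsilon^{q}$ is increasing in $-q$, so it suffices to check that each listed exponent is at least $p-9\kappa p$, which is immediate since $\kappa>0$ makes the smallest of them equal to $p-5\kappa p\ge p-9\kappa p$. Hence $\mathbb{E}\sup_{0\le T\le\tau^*}\|R(T)\|_\alpha^p\le C_p\varepsilon^{p-9\kappa p}$, as claimed. I do not anticipate a genuine analytic obstacle here, as all the work sits in Lemmas~\ref{lemma3}--\ref{lemma6}; the only points requiring care are the correct bookkeeping of exponents and the use of $\|\psi(0)\|_\alpha\le\varepsilon^{-\kappa/3}$ to remove the initial-data factors. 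It is worth noting that the stated exponent $p-9\kappa p$ is deliberately conservative---the true worst term after substitution is the $\varepsilon^{p-5\kappa p}$ coming from Lemma~\ref{lemma4}---which is consistent with the paper's remark that the constants in front of $\kappa$ are not optimized.
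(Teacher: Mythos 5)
Your proposal is correct and is exactly the argument the paper intends: Lemma \ref{lemma-7} is stated there as following "readily" from Lemmas \ref{lemma3}--\ref{lemma6}, and you have simply filled in the bookkeeping, correctly using $\|\psi(0)\|_\alpha\le\varepsilon^{-\kappa/3}$ to reduce every term to a pure power of $\varepsilon$ and verifying that each resulting exponent is at least $p-9\kappa p$. Your side remark that the true worst term is $\varepsilon^{p-5\kappa p}$ (from Lemma \ref{lemma4}) and that $9\kappa$ is therefore conservative is also consistent with the paper's own comment that the multiples of $\kappa$ are not optimized.
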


 In what follows, we shall consider the amplitude equation \eqref{amplitude-differential}
associated with \eqref{a-equation} and
we show the following uniform bound on its solution $b$.
This is crucial in order to remove the stopping time from the error estimate.
Moreover note, that our assumptions do not imply global solutions for the SPDE,
we rely on the existence of global solutions for the amplitude equation, 
which is also ensured by the following Lemma.

\begin{lemma}\label{lemma-8}
Let the Assumptions 1 - 5 be satisfied. For any $p>1$,
there exists a constant $C_p>0$ such that
\begin{eqnarray}
\mathbb{E}\sup\limits_{0\leq T\leq
T_0}\|b(T)\|_\alpha^p&\leq&C_p\|a(0)\|^p_\alpha
\label{lemma-8-bound}
\end{eqnarray}
\end{lemma}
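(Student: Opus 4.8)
The process $b$ solves the finite-dimensional It\^o SDE \eqref{amplitude-differential} on the $n$-dimensional space $\mathcal N$, driven by the rescaled Wiener process $\tilde W$. Since $\mathcal N$ is finite-dimensional, $\|\cdot\|_\alpha$ and $\|\cdot\|$ are equivalent there, so it suffices to bound moments of $\sup_T\|b(T)\|$ in the plain $\mathcal H$-norm and convert at the end. The three structural inputs I would use are: $\mathcal L_c$ is a bounded linear map on $\mathcal N$ (Assumption 2); $G_c'(0)$ is bounded, $\|G_c'(0)\cdot v\|_{\mathscr L_2(U,\mathcal N)}\le l_r\|v\|_\alpha$ (the case $u=0$ of \eqref{g-condition-2}); and, crucially, the dissipativity \eqref{F-condition-2}, $\langle\mathcal F_c(b),b\rangle\le 0$ for $b\in\mathcal N$, which is what tames the cubic drift. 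The plan is a standard energy estimate: apply It\^o's formula to $\|b\|^2$, discard the cubic term by dissipativity, control the martingale part by the Burkholder-Davis-Gundy (BDG) inequality, and close the resulting integral inequality by Gronwall. A stopping-time localization is needed to make the absorption rigorous and simultaneously yields global existence of $b$ on $[0,T_0]$.

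Concretely, I first apply It\^o's formula to $V(\tau)=\|b(\tau)\|^2$:
\[
V(T)=\|a(0)\|^2+\int_0^T\!\big[2\langle b,\mathcal L_c b\rangle+2\langle b,\mathcal F_c(b)\rangle+\|G_c'(0)\cdot b\|_{\mathscr L_2(U,\mathcal N)}^2\big]d\tau+M(T),
\]
where $M(T)=2\int_0^T\langle b,(G_c'(0)\cdot b)\,d\tilde W\rangle$ is a real martingale. Dropping the nonpositive term $2\langle b,\mathcal F_c(b)\rangle$ via \eqref{F-condition-2} and bounding the linear drift and the diffusion by $C\|b\|^2$ through the boundedness of $\mathcal L_c$ and $G_c'(0)$, I obtain $V(T)\le\|a(0)\|^2+C\int_0^T V\,d\tau+M(T)$. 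The quadratic variation obeys the same growth, $[M]_T\le 4l_r^2\int_0^T\|b\|^4 d\tau$, since the $U$-norm of the scalar integrand is at most $2\|G_c'(0)\cdot b\|_{\mathscr L_2}\|b\|\le 2l_r\|b\|^2$.

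To pass to the $p$-th moment of the supremum I treat $p\ge 2$ first; the range $1<p<2$ then follows from the case $p=2$ by Jensen's inequality. Taking the supremum over $[0,T]$, raising to the power $p/2$, and using $(x+y+z)^{p/2}\le C_p(x^{p/2}+y^{p/2}+z^{p/2})$, the middle term gives, by H\"older, $C_p\int_0^T\mathbb E\|b(\tau)\|^p d\tau$, while BDG and the bound on $[M]_T$ give
\[
\mathbb E\sup_{[0,T]}|M|^{p/2}\le C_p\,\mathbb E\Big(\int_0^T\|b\|^4 d\tau\Big)^{p/4}\le C_p\,\mathbb E\Big[\big(\sup_{[0,T]}\|b\|^p\big)^{1/2}\Big(\int_0^T\|b\|^2 d\tau\Big)^{p/4}\Big].
\]
Young's inequality splits the last expression into $\delta\,\mathbb E\sup_{[0,T]}\|b\|^p$ plus a constant times $\mathbb E(\int_0^T\|b\|^2 d\tau)^{p/2}$; choosing $\delta$ small lets me absorb the first piece. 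Writing $g(T)=\mathbb E\sup_{[0,T]}\|b\|^p$, I reach $g(T)\le C_p\|a(0)\|^p+C_p\int_0^T g(\tau)\,d\tau$, and Gronwall's lemma yields $g(T_0)\le C_p\|a(0)\|^p e^{C_pT_0}$, which is \eqref{lemma-8-bound} after reverting to the $\alpha$-norm.

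The one genuine obstacle is that the absorption step presupposes $g(T)<\infty$, which is not known a priori (indeed global existence of $b$ is part of the claim). I would handle this by replacing $T$ with $T\wedge\tau_R$, where $\tau_R=\inf\{\tau:\|b(\tau)\|\ge R\}$, so that all quantities above are finite and the absorption is legitimate, yielding a bound uniform in $R$. Letting $R\to\infty$, Fatou's lemma gives the stated estimate and, the right-hand side being finite, forces $\tau_R\to\infty$ almost surely, so $b$ does not explode and exists globally on $[0,T_0]$.
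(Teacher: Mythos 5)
Your proposal is correct and follows essentially the same route as the paper: an energy estimate via It\^o's formula, discarding the cubic drift by the dissipativity condition \eqref{F-condition-2}, controlling the martingale with Burkholder--Davis--Gundy plus Young absorption, localizing with a stopping time to justify finiteness, and closing with Gronwall. The only cosmetic difference is that you apply It\^o to $\|b\|^2$ and then raise to the power $p/2$, whereas the paper applies It\^o directly to $\|\cdot\|^p$; this changes nothing of substance.
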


\begin{proof}
This proof is relatively straightforward using It\^o-formula for powers of the norm.
For large $p>2$ define the twice continuously differentiable function
\begin{eqnarray}
f(\cdot) =\|\cdot \|^p:\;  \mathcal{H}\rightarrow\mathbb{R}.
\label{f-aux}
\end{eqnarray}
Directly, for any $x, h\in \mathcal{H}$ we have
\begin{eqnarray*}
f'(x)h=p\|x\|^{p-2}\langle x, h\rangle
\end{eqnarray*}
and
\begin{eqnarray}
f''(x)(h,h)&=&p(p-2)\|x\|^{p-4}\langle x, h\rangle\langle x,
h\rangle+p\|x\|^{p-2}\langle h,h\rangle\nonumber\\
&\leq& p(p-1)\|x\|^{p-2}\|h\|^2, 
\end{eqnarray}
so that
\begin{eqnarray}\label{derivative-1}
\lefteqn{\text{trace}[f''(b(\tau))G_c'(0)b(\tau))(G_c'(0)b(\tau))^*]}\nonumber\\
&&\leq p(p-1)\|b(\tau)\|^{p-2}\text{trace}[(G_c'(0)b(\tau))(G_c'(0)b(\tau))^*]\nonumber \\
&&\leq C p(p-1)\|b(\tau)\|^{p}.\label{Tr_1}
\end{eqnarray}
Applying  It\^{o}'s formula  \cite[Theorem 2.9]{Gawarecki} and
\eqref{Tr_1} we obtain that
\begin{eqnarray}
\|b(T)\|^p&\leq&\|a(0)\|^p+p\int_0^T\|b(\tau)\|^{p-2}\langle\mathcal{L}_cb(\tau),
b(\tau)\rangle
d\tau\nonumber\\
&&+p\int_0^T\|b(\tau)\|^{p-2}\langle\mathcal{F}_c(b(\tau)),
b(\tau)\rangle d\tau\nonumber\\
&&+p\int_0^T\|b(\tau)\|^{p-2}\langle b(\tau),G_c'(0)\cdot b(\tau)d\tilde{W}_\tau\rangle\nonumber\\
&&+\frac{1}{2}C p(p-1)\int_0^T \|b(\tau)\|^{p}d\tau\nonumber.
\end{eqnarray}
Therefore, using {Assumption 2} and the bound on $F$ from  \eqref{F-condition-2}, we get
\begin{eqnarray}
\|b(T)\|^p&\leq&\|a(0)\|^p +p\int_0^T\|b(\tau)\|^{p-2}\langle
b(\tau),G_c'(0)\cdot b(\tau)d\tilde{W}_\tau\rangle\nonumber\\
&&+C_p\int_0^T \|b(\tau)\|^{p}d\tau\label{lemma-8-1}.
\end{eqnarray}
For any stopping time $\mathcal{T}\leq T_0$, by Burkholder-Davis-Gundy inequality,
we obtain
\begin{eqnarray*}
&&p\mathbb{E}\sup\limits_{0\leq T\leq
\mathcal{T}}\int_0^T\|b(\tau)\|^{p-2}\langle
b(\tau),G_c'(0)\cdot b(\tau)d\tilde{W}_\tau\rangle\\
&&\leq
3p\mathbb{E}\left[\int_0^\mathcal{T}\sum\limits_{k=1}^\infty\|b(\tau)\|^{2p-4}\langle
b(\tau),G_c'(0)\cdot b(\tau)e_k\rangle^2
d\tau\right]^{\frac{1}{2}}\\
&&\leq
C\mathbb{E}\left[\int_0^\mathcal{T}\|b(\tau)\|^{2p}d\tau\right]^\frac{1}{2}\\
&&\leq C\mathbb{E}\left[\sup\limits_{0\leq T\leq
\mathcal{T}}\|b(\tau)\|^p\int_0^\mathcal{T}\|b(\tau)\|^{p}d\tau\right]^\frac{1}{2}\\
&&\leq \frac{1}{2}\mathbb{E}\sup\limits_{0\leq T\leq
\mathcal{T}}\|b(\tau)\|^p+C_p\int_0^\mathcal{T}\mathbb{E}\sup\limits_{0\leq
s\leq\tau}\|b(s)\|^{p}d\tau,
\end{eqnarray*}
where we applied Young's inequality in the final step.
Therefore, using  \eqref{lemma-8-1}, we obtain
\begin{eqnarray*}
\mathbb{E}\sup\limits_{0\leq T\leq \mathcal{T}}\|b(T)\|^p&\leq&
2\|a(0)\|^p+C_p\int_0^\mathcal{T}\mathbb{E}\sup\limits_{0\leq
s\leq\tau}\|b(\tau)\|^{p}d\tau.
\end{eqnarray*}
Note that we need to use a stopping time $\mathcal{T}$  here, as 
initially, we do not know that the moments of $b$ are finite.
Thus we consider only the $\mathcal{T}$'s which ensures this.

As the equation above holds for any stopping time,  we derive by using Gronwall's lemma
\[ 
\mathbb{E}\sup\limits_{0\leq T\leq T_0}\|b(\tau)\|^p
\leq
C_p\|a(0)\|^p.
\]
This finishes the proof.
\end{proof}
The next step now is to remove the error from the equation for $a$ to obtain the amplitude equation.
We show an error estimate between $a$ and the solution $b$ of the amplitude equation.
\begin{lemma}\label{lemma-9}
Assume the setting of Lemma \ref{lemma-1} and suppose furthermore
that $\|\psi(0)\|_\alpha\leq \varepsilon^{-\frac{1}{3}\kappa}$. 
For
any $p>0$, there exists a constant $C_p>1$ such that
\begin{eqnarray}
\mathbb{E}\sup\limits_{0\leq T\leq \tau^*}\|a(T)-b(T)\|^p\leq
C_p\varepsilon^{p-18\kappa p}.
\end{eqnarray}
\end{lemma}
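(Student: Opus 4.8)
The plan is to control the difference $\delta:=a-b$ on $[0,\tau^*]$. Subtracting the integral equation \eqref{amplitude-integral} for $b$ from the rewritten equation for $a$ stated just above Lemma \ref{lemma3}, the residual $R$ from \eqref{Remainder} enters as an additive forcing,
\[
\delta(T)=\int_0^T\mathcal{L}_c\delta(\tau)\,d\tau+\int_0^T[\mathcal{F}_c(a(\tau))-\mathcal{F}_c(b(\tau))]\,d\tau+\int_0^T G_c'(0)\cdot\delta(\tau)\,d\tilde W_\tau+R(T).
\]
Since I control $R$ only through $\mathbb{E}\sup\|R\|_\alpha^p$ (Lemma \ref{lemma-7}) and not through its differential, I would pass to $\eta:=\delta-R$, which solves the same equation without the forcing, namely $d\eta=[\mathcal{L}_c\delta+\mathcal{F}_c(a)-\mathcal{F}_c(b)]\,dT+G_c'(0)\delta\,d\tilde W$ with $\eta(0)=0$, where $\delta=\eta+R$ is substituted into the coefficients. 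It then suffices to bound $\mathbb{E}\sup_{[0,\tau^*]}\|\eta\|^p$ and to add the contribution of $R$ at the end. Throughout I use that all $\mathcal{H}^\alpha$-norms are equivalent on the finite-dimensional space $\mathcal{N}$.

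The key structural step, and the main obstacle, is the cubic term $\mathcal{F}_c(a)-\mathcal{F}_c(b)$. A crude estimate $\|\mathcal{F}_c(a)-\mathcal{F}_c(b)\|_{\alpha-\beta}\le C(\|a\|_\alpha^2+\|b\|_\alpha^2)\|\delta\|_\alpha$ only yields a local Lipschitz constant of size $\varepsilon^{-2\kappa}$ on $[0,\tau^*]$, so a direct Gronwall argument would produce the useless factor $\exp(C\varepsilon^{-2\kappa})$. To avoid this I would exploit the monotonicity built into \eqref{F-condition-3}: by trilinearity and symmetry,
\[
\mathcal{F}_c(a)-\mathcal{F}_c(b)=3\int_0^1\mathcal{F}_c\big(b+s\delta,\,b+s\delta,\,\delta\big)\,ds,
\]
and pairing with $\delta$ and applying \eqref{F-condition-3} (with $u=b+s\delta\in\mathcal N$ and $w=\delta\in\mathcal N$) gives $\langle\mathcal{F}_c(a)-\mathcal{F}_c(b),\delta\rangle\le0$. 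This sign is precisely what makes the cubic term dissipative for the difference and removes any exponential growth.

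Next I would apply It\^o's formula to $\|\eta\|^p$ (as in Lemma \ref{lemma-8}, taking $p$ large and reducing general $p$ by H\"older at the end). In the drift, the $\mathcal{L}_c$-term is bounded by $C\|\eta\|^{p-1}\|\delta\|$ since $\mathcal{L}_c$ is continuous on $\mathcal N$; the It\^o correction is bounded by $C\|\eta\|^{p-2}\|G_c'(0)\delta\|_{\mathscr L_2}^2\le C\|\eta\|^{p-2}\|\delta\|^2$ via \eqref{g-condition-2}; and, writing $\delta=\eta+R$ in the cubic pairing, the monotonicity gives $\langle\eta,\mathcal{F}_c(a)-\mathcal{F}_c(b)\rangle=\langle\delta,\mathcal{F}_c(a)-\mathcal{F}_c(b)\rangle-\langle R,\mathcal{F}_c(a)-\mathcal{F}_c(b)\rangle\le-\langle R,\mathcal{F}_c(a)-\mathcal{F}_c(b)\rangle$, the surviving cross term being estimated by $C\|R\|_\alpha(\|a\|_\alpha^2+\|b\|_\alpha^2)\|\delta\|_\alpha$ through \eqref{F-condition-1}. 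The martingale term is treated with the Burkholder--Davis--Gundy inequality, and Young's inequality converts every $\|\eta\|$-cross term into a multiple of $\|\eta\|^p$ (for Gronwall) plus a forcing term built from powers of $\|R\|_\alpha$, $\|a\|_\alpha$ and $\|b\|_\alpha$.

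Finally I would insert $\|\delta\|\le\|\eta\|+\|R\|$ everywhere, take expectations, and close with Gronwall's lemma. The forcing is controlled by $\mathbb{E}\sup\|R\|_\alpha^p\le C\varepsilon^{(1-9\kappa)p}$ (Lemma \ref{lemma-7}), by $\|a\|_\alpha\le\varepsilon^{-\kappa}$ on $[0,\tau^*]$ (Definition \ref{stopping time}), and by $\mathbb{E}\sup\|b\|_\alpha^p\le C\|a(0)\|_\alpha^p\le C\varepsilon^{-\kappa p}$ (Lemma \ref{lemma-8}, valid since $\|a(0)\|_\alpha\le\varepsilon^{-\kappa}$, otherwise $\tau^*=0$ and the claim is trivial). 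Combining these bounds generously, with no attempt at optimality, yields $\mathbb{E}\sup_{[0,\tau^*]}\|\eta\|^p\le C_p\varepsilon^{(1-18\kappa)p}$, and adding the contribution of $R$ gives the asserted bound on $\mathbb{E}\sup_{[0,\tau^*]}\|a-b\|^p$. The only genuinely delicate point is the monotonicity step; everything else is the standard It\^o/BDG/Gronwall machinery together with bookkeeping of the $\varepsilon$-powers.
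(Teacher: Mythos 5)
Your proposal is correct, and its skeleton coincides with the paper's: the same substitution removing $R$ from the It\^o differential (your $\eta=\delta-R$ is, up to sign, the paper's $h=b-a+R$), It\^o's formula for $\|\cdot\|^p$, Burkholder--Davis--Gundy for the martingale part, and Young plus Gronwall at the end, with the trivial-case reduction $\|a(0)\|_\alpha\le\varepsilon^{-\kappa}$ feeding Lemma \ref{lemma-8}. The one genuine divergence is the treatment of the cubic difference, which is the heart of the lemma. The paper invokes the tailor-made Assumption \eqref{F-condition-4} with $u=h$, $v=b$, $w=R$, which hands it directly the forcing terms $\|h\|^{p-2}(\|R\|^4+\|R\|^2\|b\|^2)$. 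You instead derive the needed one-sided estimate from the more primitive hypotheses: writing $\mathcal{F}_c(a)-\mathcal{F}_c(b)=3\int_0^1\mathcal{F}_c(b+s\delta,b+s\delta,\delta)\,ds$ and applying \eqref{F-condition-3} gives $\langle\mathcal{F}_c(a)-\mathcal{F}_c(b),\delta\rangle\le 0$, leaving only the cross term $\langle R,\mathcal{F}_c(a)-\mathcal{F}_c(b)\rangle$ to be bounded via \eqref{F-condition-1}. This is a nice observation --- it shows that for this lemma the monotonicity \eqref{F-condition-3} together with \eqref{F-condition-1} essentially substitutes for \eqref{F-condition-4} --- but it is slightly lossier: your forcing carries the factor $\|a\|_\alpha^2+\|b\|_\alpha^2$, and on $[0,\tau^*]$ the $\|a\|_\alpha^2$ is only controlled by $\varepsilon^{-2\kappa}$ from the stopping time, whereas the paper's version involves only $\|b\|$, whose moments are controlled by Lemma \ref{lemma-8} independently of the cut-off. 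Since the target exponent $p-18\kappa p$ is generous, your extra powers of $\varepsilon^{-\kappa}$ are harmlessly absorbed, and the argument closes.
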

\begin{proof}
For the proof we derive an equation for the error $a-b$ and proceed similarly than for the bound on $b$.
But as $R$ (defined in \eqref{Remainder}) is not differentiable in the Ito-sense,
we first substitute $\varphi:=a-R$. Clearly, we have
\begin{eqnarray*}
\varphi(T)&=&a(0)+\int_0^T\mathcal{L}_c(\varphi(\tau)+R(\tau))d\tau+\int_0^T\mathcal{F}_c(\varphi(\tau)+R(\tau))d\tau\\
&&+\int_0^TG_c'(0)\cdot(\varphi(\tau)+R(\tau))d\tilde{W}_\tau.
\end{eqnarray*}
Defining the error  $h:=b-\varphi=b-a+R$,
we get
\begin{eqnarray}
h(T)&=&\int_0^T\mathcal{L}_ch(\tau)d\tau-\int_0^T\mathcal{L}_cR(\tau)d\tau+\int_0^T\mathcal{F}_c(b(\tau))d\tau\nonumber\\
&&-\int_0^T\mathcal{F}_c(b(\tau)-h(\tau)+R(\tau))d\tau+\int_0^TG_c'(0)(h(\tau)-R(\tau))d\tilde{W}_\tau.\nonumber
\end{eqnarray}
Let  $f$ be the $p$-th power of the norm as in \eqref{f-aux}. 
By using again \eqref{derivative-1} we have
\begin{eqnarray*}
\lefteqn{\text{trace}[f''(h(\tau))(G_c'(0)(b(\tau)-R(\tau)))(G_c'(0)(b(\tau)-R(\tau)))^*]}\\
&&\leq C p(p-1)\|b(\tau)\|^{p-2}\|h(\tau)-R(\tau)\|^2.
\end{eqnarray*}
Applying  It\^{o}'s formula and using the estimate above, we obtain
\begin{eqnarray}
\|h(T)\|^p
&\leq&p\int_0^T\|h(\tau)\|^{p-2}\langle\mathcal{L}_ch(\tau),h(\tau)\rangle d\tau
\nonumber \\ 
&& -p\int_0^T\|h(\tau)\|^{p-2}\langle\mathcal{L}_cR(\tau),h(\tau)\rangle
d\tau\nonumber\\
&&+p\int_0^T\|h(\tau)\|^{p-2}\langle\mathcal{F}_c(b(\tau))-\mathcal{F}_c(b(\tau)-h(\tau)+R(\tau)),h(\tau)\rangle
d\tau\nonumber\\
&&+p\int_0^T\|h(\tau)\|^{p-2}\langle
h(\tau),G_c'(0)\cdot(h(\tau)-R(\tau))d\tilde{W}_\tau\rangle\nonumber\\
&&+\frac{1}{2}C p(p-1)\int_0^T\|h(\tau)\|^{p-2}
\|h(\tau)-R(\tau)\|^2 d\tau. \nonumber
\end{eqnarray}
By condition \eqref{F-condition-4} and Cauchy-Schwarz inequality, we
derive
\begin{eqnarray}
\|h(T)\|^p&\leq&C_p\int_0^T\|h(\tau)\|^pd\tau+C_p\int_0^T\|h(\tau)\|^{p-1}\|R(\tau)\|d\tau\nonumber\\
&&+C_p\int_0^T\|h(\tau)\|^{p-2}\|R(\tau)\|^2d\tau+C_p\int_0^T\|h(\tau)\|^{p-2}\|R(\tau)\|^4d\tau\nonumber\\
&&+C_p\int_0^T\|h(\tau)\|^{p-2}\|R(\tau)\|^2\|b(\tau)\|^2d\tau\nonumber\\
&&+p\int_0^T\|h(\tau)\|^{p-2}\langle
h(\tau),G_c'(0)\cdot(h(\tau)-R(\tau))d\tilde{W}_\tau\rangle.\nonumber
\end{eqnarray}
Then, Young's inequality yields
\begin{eqnarray}
\|h(T)\|^p&\leq&C_p\int_0^T\|h(\tau)\|^pd\tau+C_p\int_0^T\|R(\tau)\|^pd\tau+C_p\int_0^T \|R(\tau)\|^{2p}d\tau \nonumber\\
&&+C_p\int_0^T\|R(\tau)\|^p\|b(\tau)\|^pd\tau\nonumber\\
&&+p\int_0^T\|h(\tau)\|^{p-2}\langle
h(\tau),G_c'(0)[h(\tau)-R(\tau)]d\tilde{W}_\tau\rangle.\label{lemma-9-1}
\end{eqnarray}
The last term on the right hand side of \eqref{lemma-9-1} is bounded
as follows. By Burkholder-Davis-Gundy inequality for any
$\mathcal{T}\in [0, T_0]$, we get
\begin{eqnarray}
&&\mathbb{E}\sup\limits_{0\leq
T\leq\tau^*\wedge\mathcal{T}}\left|p\int_0^T\|h(\tau)\|^{p-2}\langle
h(\tau),G_c'(0)[h(\tau)-R(\tau)]d\tilde{W}_\tau\rangle\right|\nonumber\\
&&\leq
3p\mathbb{E}\left[\int_0^{\tau^*\wedge\mathcal{T}}\|h(\tau)\|^{2p-4}\|h(\tau)\|^2\|h(\tau)-R(\tau)\|^2d\tau\right]^{\frac{1}{2}}\nonumber\\
&&\leq
C_p\mathbb{E}\left[\int_0^{\tau^*\wedge\mathcal{T}}\big[\|h(\tau)\|^{2p}+\|h(\tau)\|^{2p-2}\|R(\tau)\|^2\big]d\tau\right]^{\frac{1}{2}}.\nonumber
\end{eqnarray}
Using again  Young's inequality implies
\begin{eqnarray}
&&\mathbb{E}\sup\limits_{0\leq
T\leq\tau^*\wedge\mathcal{T}}\left|p\int_0^T\|h(\tau)\|^{p-2}\langle
h(\tau),G_c'(0)[h(\tau)-R(\tau)]d\tilde{W}_\tau\rangle\right|\nonumber\\
&&\leq
C_p\mathbb{E}\left[\int_0^{\tau^*\wedge\mathcal{T}}\|h(\tau)\|^{2p}d\tau\right]^\frac{1}{2}
+C_p\mathbb{E}\left[\int_0^{\tau^*\wedge\mathcal{T}}\|R(\tau)\|^{2p}d\tau\right]^\frac{1}{2}\nonumber\\
&&\leq\frac{1}{2}\mathbb{E}\sup\limits_{0\leq T\leq
\tau^*\wedge\mathcal{T}}\|h(T)\|^p+C_p\int_0^{
\mathcal{T}}\mathbb{E}\sup\limits_{0\leq r\leq
\tau^*\wedge\tau}\|h(r)\|^{p}d\tau\nonumber\\
&&+C_p\mathbb{E}\left[\int_0^{\tau^*\wedge\mathcal{T}}\|R(\tau)\|^{2p}d\tau\right]^\frac{1}{2}.\label{lemma-9-2}
\end{eqnarray}
Therefore, collecting together \eqref{lemma-7-bound},
\eqref{lemma-8-bound}, \eqref{lemma-9-1} and \eqref{lemma-9-2}, for
any $\mathcal{T}\in [0, T_0]$ we obtain
\begin{eqnarray}
\mathbb{E}\sup\limits_{0\leq
T\leq\tau^*\wedge\mathcal{T}}\|h(T)\|^{p}&\leq&
C_p\int_0^{\mathcal{T}}\mathbb{E}\sup\limits_{0\leq
r\leq\tau^*\wedge\tau}\|h(r)\|^pd\tau\nonumber\\
&&+C_p\varepsilon^{p-18\kappa p}.\nonumber
\end{eqnarray}
Using  Gronwall's lemma we can show
\begin{eqnarray}
\mathbb{E}\sup\limits_{0\leq T\leq \tau^*}\|h(T)\|^p&\leq&
C_p\varepsilon^{p-18\kappa p}+\varepsilon^{p-9\kappa p}\|a(0)\|^p
\nonumber\\
&\leq& C_p\varepsilon^{p-18\kappa p}, \nonumber
\end{eqnarray}
so that, in view of \eqref{lemma-7-bound},
\begin{eqnarray}
\mathbb{E}\sup\limits_{0\leq T\leq
\tau^*}\|a(T)-b(T)\|^p&\leq&\mathbb{E}\sup\limits_{0\leq T\leq
\tau^*}\|h(T)\|^p+\mathbb{E}\sup\limits_{0\leq T\leq
\tau^*}\|R(T)\|^p\nonumber\\
&\leq&C_p\varepsilon^{p-18\kappa p}.\nonumber
\end{eqnarray}
\end{proof}

\begin{remark}
Notice that by Lemma \ref{lemma-9}, for  any $p>0$ and $\kappa\in
(0,\frac{1}{18})$, we obtain
\begin{eqnarray}
\mathbb{E}\sup\limits_{0\leq T\leq \tau^*}\|a(T) \|^p&\leq&
\mathbb{E}\sup\limits_{0\leq T\leq
\tau^*}\|a(T)-b(T)\|^p+\mathbb{E}\sup\limits_{0\leq T\leq \tau^*}\|
b(T)\|^p\nonumber\\
&\leq&C_p(1+\|a(0)\|^p)\label{Remark2.1-1}.
\end{eqnarray}
We can use this to show that $\|a\|<\varepsilon^{-\kappa}$ on $[0,T_0]$ with probability almost $1$.
\end{remark}

Let us define the overall error between $\varepsilon(b+Q)$ and $u$ by
\begin{eqnarray}
\mathcal{R}(T)&:=&u(\varepsilon^{-2}T)-\varepsilon b(T)-\varepsilon Q(T)\nonumber\\
&=& \varepsilon [a(T)-b(T)+\psi(T)-Q(T)]\nonumber\\
&=& \varepsilon [a(T)-b(T)+I(T)+J(T)+K(T)] . 
\label{Remainder2}
\end{eqnarray}
We already know that $I$ and $J$ are uniformly small. It remains to bound $K$.
For this we use the factorization method and start with the following stochastic integral.
\begin{lemma}\label{small-stoch-integral}
Assume the setting of Lemma
\ref{lemma-9}. 
For any $p>1$, there exists a constant $C_p>0$ such
that
\begin{eqnarray}
\mathbb{E}\sup\limits_{0\leq T\leq
T_0}\Big\|\int_0^Te^{\mathcal{A}_s(T-\tau)\varepsilon^{-2}}G_s'(0)\cdot
b(\tau)d\tilde{W}_\tau \Big \|^p_\alpha \leq
C_p\varepsilon^{p-\frac{1}{2}\kappa}. \label{small-stoch}
\end{eqnarray}
\end{lemma}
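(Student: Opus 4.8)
The plan is to apply the factorization method to the stochastic convolution
\[
Z(T):=\int_0^T e^{\mathcal{A}_s(T-\tau)\varepsilon^{-2}}G_s'(0)\cdot b(\tau)\,d\tilde{W}_\tau,
\]
exploiting the exponential stability of the \emph{fast} semigroup $e^{\mathcal{A}_s t\varepsilon^{-2}}$ on $\mathcal{S}$ to generate the powers of $\varepsilon$, and Lemma \ref{lemma-8} to control the moments of $b$. Fix $\delta\in(0,\tfrac12)$ and write, via the factorization identity,
\[
Z(T)=\frac{\sin\pi\delta}{\pi}\int_0^T (T-s)^{\delta-1}e^{\mathcal{A}_s(T-s)\varepsilon^{-2}}Y_\delta(s)\,ds,\qquad Y_\delta(s):=\int_0^s (s-\tau)^{-\delta}e^{\mathcal{A}_s(s-\tau)\varepsilon^{-2}}G_s'(0)\cdot b(\tau)\,d\tilde{W}_\tau,
\]
which is legitimate because $e^{\mathcal{A}_s t\varepsilon^{-2}}$ is a semigroup and $\int_\tau^T (T-s)^{\delta-1}(s-\tau)^{-\delta}\,ds=\pi/\sin\pi\delta$.

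First I would estimate the pointwise moments of $Y_\delta$. By the Burkholder--Davis--Gundy inequality, the semigroup bound \eqref{semigroup} with $\beta=\alpha$ (pure exponential decay $e^{-\rho(s-\tau)\varepsilon^{-2}}$), and \eqref{g-condition-2} giving $\|G_s'(0)b(\tau)\|_{\mathscr{L}_2(U,\mathcal{H}^\alpha)}\le l_r\|b(\tau)\|_\alpha$, for any $q>1$ one obtains
\[
\mathbb{E}\|Y_\delta(s)\|_\alpha^q\le C_q\,\mathbb{E}\Big[\sup_{0\le\tau\le T_0}\|b(\tau)\|_\alpha^2\int_0^s (s-\tau)^{-2\delta}e^{-2\rho(s-\tau)\varepsilon^{-2}}\,d\tau\Big]^{q/2}.
\]
The substitution $r=(s-\tau)\varepsilon^{-2}$ turns the deterministic integral into $\varepsilon^{2-4\delta}$ times a convergent Gamma-type integral (finite since $2\delta<1$), so with Lemma \ref{lemma-8} this gives $\mathbb{E}\|Y_\delta(s)\|_\alpha^q\le C_q\varepsilon^{(1-2\delta)q}\,\mathbb{E}\sup_{T\le T_0}\|b(T)\|_\alpha^q$.

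Next I would pass from $Y_\delta$ to the supremum of $Z$. Using \eqref{semigroup} once more together with H\"older's inequality (exponents $q/(q-1)$ and $q$) and the rescaling $r=(T-s)\varepsilon^{-2}$, the deterministic kernel $(T-s)^{\delta-1}e^{-\rho(T-s)\varepsilon^{-2}}$ is controlled in $L^{q/(q-1)}$ by $C\varepsilon^{2\delta-2/q}$, provided $\delta>1/q$; this yields
\[
\sup_{0\le T\le T_0}\|Z(T)\|_\alpha\le C\varepsilon^{2\delta-2/q}\Big(\int_0^{T_0}\|Y_\delta(s)\|_\alpha^q\,ds\Big)^{1/q},
\]
and hence, after taking expectations and inserting the bound on $Y_\delta$, $\mathbb{E}\sup_{T\le T_0}\|Z(T)\|_\alpha^q\le C_q\varepsilon^{q-2}\,\mathbb{E}\sup_{T\le T_0}\|b(T)\|_\alpha^q$, the $\delta$-dependence cancelling.

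The main obstacle is the stray factor $\varepsilon^{-2}$ produced by the H\"older step: because the fast semigroup concentrates the convolution kernel on the time scale $\varepsilon^2$, its $L^{q/(q-1)}$-norm exceeds its $L^1$-norm by $\varepsilon^{-2/q}$, so the estimate taken directly at the moment $q=p$ would fall short of the desired $\varepsilon^{p-\kappa/2}$. I would overcome this by carrying out the whole computation at a large moment $q=p'$ (with $\delta\in(1/p',\tfrac12)$ fixed) and then returning to $p$ through Jensen's inequality on the probability space, $\mathbb{E}\sup_{T\le T_0}\|Z(T)\|_\alpha^p\le\big(\mathbb{E}\sup_{T\le T_0}\|Z(T)\|_\alpha^{p'}\big)^{p/p'}$. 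Choosing $p'\ge 4p/\kappa$ reduces the residual loss to $\varepsilon^{-2p/p'}\le\varepsilon^{-\kappa/2}$, and since Lemma \ref{lemma-8} bounds $\mathbb{E}\sup_{T\le T_0}\|b(T)\|_\alpha^{p'}$ by a constant depending only on $\|a(0)\|_\alpha$, we conclude $\mathbb{E}\sup_{T\le T_0}\|Z(T)\|_\alpha^p\le C_p\varepsilon^{p-\kappa/2}$, which is \eqref{small-stoch}.
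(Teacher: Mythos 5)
Your proposal is correct and follows essentially the same route as the paper's proof: the factorization identity with exponent in $(0,1/2)$, a Burkholder--Davis--Gundy bound on $Y_\delta$ yielding $\varepsilon^{(1-2\delta)q}$, a H\"older estimate on the deterministic convolution producing the compensating $\varepsilon^{2\delta q-2}$ (hence a net $\varepsilon^{q-2}$), and finally running the argument at a large moment $p'$ and interpolating back to $p$ via Jensen/H\"older to absorb the loss of $\varepsilon^{-2}$ into $\varepsilon^{-\kappa/2}$, with Lemma \ref{lemma-8} controlling the moments of $b$. The only difference is cosmetic: you make the factorization constant and the choice $p'\ge 4p/\kappa$ explicit where the paper leaves them implicit.
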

\begin{proof}
For large $p>1$ fix $\gamma \in (0,1/2)$.
By the celebrated factorization method, if we set
\[
Y_\gamma(s) = \int_0^s(s-\tau)^{-\gamma}  e^{\mathcal{A}_s
(s-\tau)\varepsilon^{-2}}G_s'(0)\cdot b(\tau) d\tilde{W}_\tau
\]
we have 
\[
\int_0^T e^{\mathcal{A}_s(T-\tau)\varepsilon^{-2}} G_s'(0)\cdot
b(\tau) d\tilde{W}_\tau = C_\gamma\int_0^T  (T-\tau)^{\gamma-1}
e^{\mathcal{A}_s(T-\tau)\varepsilon^{-2}}Y_\gamma(\tau) d\tau
\]
for  some constant $C_\gamma>0$. Thus we obtain by H\"older
inequality and the bounds on the semigroup on the space $\mathcal{S}$ that
\begin{eqnarray*}
\lefteqn{ \Big\|\int_0^T e^{\mathcal{A}_s(T-\tau)\varepsilon^{-2}} G_s'(0)\cdot b(\tau) d\tilde{W}_\tau \Big\|_\alpha^p} \\
 &\leq & C \Big( \int_0^T  (T-\tau)^{-(1-\gamma)} e^{-\rho(T-\tau)\varepsilon^{-2}}\|Y_\gamma(\tau)\|_\alpha  d\tau \Big)^p \\
 &\leq & C  \Big(  \int_0^T  (T-\tau)^{-(1-\gamma) p/(p-1)} e^{-\rho(T-\tau)\varepsilon^{-2}p/(p-1)} d\tau \Big)^{p-1}
 \int_0^T \|Y_\gamma(\tau)\|_\alpha^p  d\tau \\
 &\leq &C  \varepsilon^{2\gamma p-2}   \Big(\int_0^{T/\varepsilon^2}\tau^{-{(1-\gamma)p}/{(p-1)}} e^{-\rho\tau p /{(p-1)}} d\tau \Big)^{p-1}\int_0^T \|Y_\gamma(\tau)\|_\alpha^p
 d\tau\\
 &\leq& C \varepsilon^{2\gamma p-2}   \int_0^T \|Y_\gamma(\tau)\|_\alpha^p  d\tau
 \;.     
\end{eqnarray*}
Note that we need to fix $p\gg1$ large or $1\gg\gamma>0$ small in order to have an integrable pole in the previous estimate. 

Moreover, by Burkholder-Davis-Gundy inequality, but now without the supremum in time 
we obtain for $t\in[0,T_0]$
\begin{eqnarray*}
 \mathbb{E} \|Y_\gamma(t)\|_\alpha^p
 &\leq & C\mathbb{E} \Big( \int_0^t(t-\tau)^{-2\gamma} \Big\|e^{\mathcal{A}_s (t-\tau)\varepsilon^{-2}}G_s'(0)\cdot b(\tau)
 \Big\|^2_{\mathscr{L}_2(U, \mathcal {H}^\alpha)} d\tau \Big)^{p/2}\\
 &\leq &  C\mathbb{E}\Big( \int_0^t(t-\tau)^{-2\gamma}  e^{-\rho(t-\tau)\varepsilon^{-2}}\|b(\tau)\|^2 d\tau \Big)^{p/2}\\
&\leq & C\mathbb{E} \sup\limits_{0\leq \tau \leq T_0} \|b(\tau)\|^p  \varepsilon^{2(1-2\gamma) p/2} \\
&\leq & C\mathbb{E} \sup\limits_{0\leq \tau \leq T_0} \|b(\tau)\|^p  \varepsilon^{(1-2\gamma) p}\;.
\end{eqnarray*}
This finally implies
\[
\mathbb{E} \sup\limits_{0\leq T\leq T_0} \Big\|\int_0^T
e^{\mathcal{A}_s(T-\tau)\varepsilon^{-2}} G_s'(0)\cdot b(\tau)
d\tilde{W}_\tau \Big\|_\alpha^p
 \leq C T_0  \mathbb{E} \sup\limits_{0\leq \tau \leq
T_0} \|b(\tau)\|^p \varepsilon^{p-2}\;.
\]
 As we can choose $p$ arbitrarily large, we obtain via H\"older inequality that for any
small $\tilde\kappa>0$ there is a constant such that
\begin{eqnarray*}
\mathbb{E} \sup_{0\leq T\leq T_0} \Big\|\int_0^T
e^{\mathcal{A}_s(T-\tau)\varepsilon^{-2}} G_s'(0)\cdot b(\tau)
d\tilde{W}_\tau \Big\|_\alpha^p \leq C   \mathbb{E} \sup_{0\leq \tau
\leq T_0} \|b(\tau)\|^p \varepsilon^{p - \tilde\kappa}\;,
\end{eqnarray*}
so that, thanks to \eqref{lemma-8-bound}, we have
\begin{eqnarray*}
\mathbb{E} \sup_{0\leq T \leq T_0} \Big\|\int_0^T
e^{\mathcal{A}_s(T-\tau)\varepsilon^{-2}} G_s'(0)\cdot b(\tau)
d\tilde{W}_\tau \Big\|_\alpha^p \leq
C_p\varepsilon^{p-\frac{1}{2}\kappa}.
\end{eqnarray*}
\end{proof}
In order to bound $K$ we set
\begin{eqnarray*}
M(T):=\int_0^Te^{\mathcal{A}_s(T-\tau)\varepsilon^{-2}}\left(\frac{1}{\varepsilon}G_s(\varepsilon
a(\tau)+\varepsilon\psi(\tau))- G_s'(0)\cdot
b(\tau)\right)d\tilde{W}_\tau,
\end{eqnarray*}
we have from \eqref{e:DB1}
\begin{eqnarray*}
K(T)= M(T)+\int_0^Te^{\mathcal{A}_s(T-\tau)\varepsilon^{-2}}G_s'(0)\cdot b(\tau) d\tilde{W}_\tau.
\end{eqnarray*}
where we just bounded the integral on the right in Lemma \ref{small-stoch-integral}. 
It remains to bound $M$. Here we proceed similarly to the previous lemma using factorization.
\begin{lemma}\label{lemma-10}
Assume the setting of Lemma \ref{lemma-9}. For any $p>1$ there
exists a constant $C_p>0$ such that
\begin{eqnarray}
\mathbb{E}\sup\limits_{0\leq T\leq \tau^*}\|M(T)\|_\alpha^p\leq
C_p\varepsilon^{p-2\kappa p}.
\end{eqnarray}
\end{lemma}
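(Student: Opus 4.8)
The plan is to mimic the factorization argument of Lemma \ref{small-stoch-integral}, but now applied to the integrand $\Phi(\tau):=\frac{1}{\varepsilon}G_s(\varepsilon a(\tau)+\varepsilon\psi(\tau))-G_s'(0)\cdot b(\tau)$. First I would insert the cut-off: since for $T\le\tau^*$ the process $M(T)$ coincides with the stochastic convolution of the stopped integrand $1_{[0,\tau^*]}(\tau)\Phi(\tau)$, I may replace $\Phi$ throughout by its stopped version, which renders all the moments below finite and the integrand controllable by the definition of $\tau^*$.

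The first substantial step is to expand and estimate the $\mathscr{L}_2(U,\mathcal{H}^\alpha)$-norm of $\Phi$. Exactly as in the proof of Lemma \ref{lemma6}, Taylor's formula gives $\Phi(\tau)=G_s'(0)\cdot(a(\tau)-b(\tau))+G_s'(0)\cdot\psi(\tau)+\frac{\varepsilon}{2}G_s''(z(\tau))\cdot(a(\tau)+\psi(\tau),a(\tau)+\psi(\tau))$ for some $z(\tau)$ on the segment from $0$ to $\varepsilon a(\tau)+\varepsilon\psi(\tau)$. Using \eqref{g-condition-2}, \eqref{g-condition-3} and the definition of $\tau^*$ (so that $\|a\|_\alpha,\|\psi\|_\alpha\le\varepsilon^{-\kappa}$ on $[0,\tau^*]$) yields the pointwise bound $\|\Phi(\tau)\|^2_{\mathscr{L}_2(U,\mathcal{H}^\alpha)}\le C(\|a(\tau)-b(\tau)\|^2+\varepsilon^{-2\kappa}+\varepsilon^{2-4\kappa})$ on $[0,\tau^*]$. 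The term $\varepsilon^{-2\kappa}$, coming from $G_s'(0)\cdot\psi$, is the dominant one; the $G''$-piece is smaller, and the random factor $\|a-b\|^2$ is controlled in $L^{p/2}$ by Lemma \ref{lemma-9}, which gives $\mathbb{E}\sup_{[0,\tau^*]}\|a-b\|^p\le C_p\varepsilon^{p-18\kappa p}$, a quantity dominated by $\varepsilon^{-\kappa p}$ for the small $\kappa$ we consider.

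Next I would run the factorization machinery essentially verbatim. Fixing $\gamma\in(0,1/2)$ and setting $Y_\gamma(s)=\int_0^s(s-\tau)^{-\gamma}e^{\mathcal{A}_s(s-\tau)\varepsilon^{-2}}1_{[0,\tau^*]}(\tau)\Phi(\tau)\,d\tilde{W}_\tau$, the same H\"older estimate together with the semigroup bound \eqref{semigroup} gives $\sup_{0\le T\le\tau^*}\|M(T)\|_\alpha^p\le C\varepsilon^{2\gamma p-2}\int_0^{\tau^*}\|Y_\gamma(\tau)\|_\alpha^p\,d\tau$. Applying the Burkholder-Davis-Gundy inequality to $Y_\gamma$ without a supremum in time, together with the integrand bound above and the substitution that turns $\int_0^t(t-\tau)^{-2\gamma}e^{-\rho(t-\tau)\varepsilon^{-2}}\,d\tau$ into a factor of order $\varepsilon^{2-4\gamma}$, yields $\mathbb{E}\|Y_\gamma(t)\|_\alpha^p\le C\varepsilon^{(1-2\gamma)p}(\mathbb{E}\sup_{[0,\tau^*]}\|a-b\|^p+\varepsilon^{-\kappa p})\le C\varepsilon^{(1-2\gamma)p-\kappa p}$ uniformly in $t\in[0,T_0]$. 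Integrating in time and combining with the previous display produces $\mathbb{E}\sup_{0\le T\le\tau^*}\|M(T)\|_\alpha^p\le C\varepsilon^{p-2-\kappa p}$ for every fixed large $p$.

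Finally, exactly as at the end of Lemma \ref{small-stoch-integral}, I would remove the spurious factor $\varepsilon^{-2}$ by letting $p$ be large: writing the exponent as $(1-\kappa-2/p)p$ and using that on the probability space $\|M\|_{L^{p_0}}\le\|M\|_{L^{p_1}}$ for $p_0\le p_1$, one converts the estimate for large $p_1$ into an estimate for arbitrary $p$ with an arbitrarily small additional loss $\tilde\kappa$; choosing $\tilde\kappa\le\kappa$ gives the claimed $\mathbb{E}\sup_{0\le T\le\tau^*}\|M(T)\|_\alpha^p\le C_p\varepsilon^{(1-2\kappa)p}=C_p\varepsilon^{p-2\kappa p}$. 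I expect the main obstacle to be bookkeeping rather than a genuine difficulty: one has to keep the random term $\|a-b\|^2$ inside the factorization (it cannot be bounded almost surely by the stopping time alone, since $b$ is not cut off) and to carry the H\"older-interpolation step carefully, so that the two separate losses of order $\kappa$ --- one from the crude bound $\|\psi\|_\alpha\le\varepsilon^{-\kappa}$ and one from trading moments --- combine into the stated $2\kappa$.
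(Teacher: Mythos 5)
Your proposal is correct and follows essentially the same route as the paper: Taylor-expand $\Phi$, bound its $\mathscr{L}_2(U,\mathcal{H}^\alpha)$-norm via the stopping time and Lemma \ref{lemma-9}, and rerun the factorization/large-$p$ argument of Lemma \ref{small-stoch-integral} on the stopped integrand. If anything you are slightly more careful than the paper, which asserts the integrand is bounded by $C\varepsilon^{-\kappa}$ ``uniformly in time'' on $[0,\tau^*]$ even though the $\|a-b\|$ contribution is not controlled by the stopping time and really does require the moment bound of Lemma \ref{lemma-9}, exactly as you note.
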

\begin{proof}

We can follow exactly the proof of the previous Lemma \ref{small-stoch-integral} 
but have to pay attention to the fact that the integrand in $M$ is only defined for $t\leq\tau^*$. 
Moreover, the integrand is due to the presence of $\psi$ and thus $K$ not uniformly bounded in time.

Define the integrand as 
\[
\Phi(\tau)= \frac{1}{\varepsilon}G_s(\varepsilon
a(\tau)+\varepsilon\psi(\tau))- G_s'(0)\cdot
b(\tau)
\]
We notice that by Taylor's formula
\begin{eqnarray}
\Phi(\tau)&=& G'(0)\cdot \psi(\tau)+G'(0)\cdot[a(\tau)-b(\tau)]\nonumber\\
&&\quad+\frac{\varepsilon}{2}G''(\tilde{z}(\tau))\cdot( a(\tau)+
\psi(\tau), a(\tau)+ \psi(\tau)),\nonumber
\end{eqnarray}
where $\tilde{z}(\tau)$ is a vector on the line segment connecting
$0$ and $\varepsilon a(\tau)+\varepsilon\psi(\tau)$. Therefore,
\[
\Big\|\Phi(\tau)\Big\|_{\mathscr{L}_2(U, \mathcal{H}^\alpha)}
\leq  C\|\psi(\tau)\|_\alpha+C\|a(\tau)-b(\tau)\|_\alpha
+C\varepsilon\|a(\tau)\|_\alpha^2+C\varepsilon\|\psi(\tau)\|_\alpha^2,
\]
Note that for $\tau\leq\tau^*$ the right hand side above is bounded by $C\varepsilon^{-\kappa}$ uniformly in time.

We obtain following the lines of  Lemma \ref{small-stoch-integral}
\begin{eqnarray}
\lefteqn{\mathbb{E}\sup\limits_{0\leq T\leq \tau^*}\|M(T)\|_\alpha^p} \\
&& \leq  
\mathbb{E} \sup\limits_{0\leq T\leq T_0}  
\Big\|
\int_0^T  e^{\mathcal{A}_s(T-\tau)\varepsilon^{-2}}   1_{[0,\tau*]}(\tau)\Phi(\tau) d\tilde{W}_\tau
\Big\|_\alpha^p \nonumber \\
&&
\leq  C \varepsilon^{p-\kappa}
\mathbb{E} \sup\limits_{0\leq T\leq T_0}  
\Big\| 1_{[0,\tau*]}(\tau)\Phi(\tau)
\Big\|_{\mathscr{L}_2(U,\mathcal{H}^\alpha)}^p \nonumber\\
&&
\leq  C \varepsilon^{p-\kappa}
\mathbb{E} \sup\limits_{0\leq T\leq \tau*}  
\Big\|  \Phi(\tau)
\Big\|_{\mathscr{L}_2(U,\mathcal{H}^\alpha)}^p \nonumber
\end{eqnarray}
so that
\[
\mathbb{E}\sup\limits_{0\leq T\leq \tau^*}\|M(T)\|_\alpha^p \leq C\varepsilon^{p(1-2\kappa)}
\] 
\end{proof}

As a consequence of Lemmas \ref{lemma-1}, \ref{lemma-9}, and
\ref{lemma-10}, we have the following bound on $\mathcal{R}$:
\begin{lemma}\label{lemma-10-1}
Assume the setting of Lemma \ref{lemma-9}. For any $p>1$, there
exists a constant $C_p>0$ such that
\begin{eqnarray}
\mathbb{E}\sup\limits_{0\leq T\leq \tau^*}\|\mathcal
{R}(T)\|_\alpha^p\leq C_p\varepsilon^{2p-18\kappa
p}.\label{lemma-10-1-bound}
\end{eqnarray}
\end{lemma}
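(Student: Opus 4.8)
The plan is to read the bound off directly from the decomposition \eqref{Remainder2}, which already expresses $\mathcal{R}$ as $\varepsilon$ times a sum of terms each of which has been (or can immediately be) controlled in the preceding lemmas. Concretely, from \eqref{Remainder2} we have
\[
\mathcal{R}(T) = \varepsilon\big[(a(T)-b(T)) + I(T) + J(T) + K(T)\big],
\]
so by the elementary inequality $\|\sum_i x_i\|^p \le C_p \sum_i \|x_i\|^p$ it suffices to estimate the $p$-th moment of the supremum over $[0,\tau^*]$ of each summand separately and then multiply by $\varepsilon^p$.

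For the first three summands the work is already done: Lemma \ref{lemma-9} gives $\mathbb{E}\sup_{[0,\tau^*]}\|a-b\|^p \le C_p \varepsilon^{p-18\kappa p}$, while \eqref{I-bound} and \eqref{J-bound} of Lemma \ref{lemma-1} give $\mathbb{E}\sup_{[0,\tau^*]}\|I\|_\alpha^p \le C\varepsilon^{2p-\kappa p}$ and $\mathbb{E}\sup_{[0,\tau^*]}\|J\|_\alpha^p \le C\varepsilon^{2p-3\kappa p}$. For $K$ I would use the splitting introduced just before Lemma \ref{lemma-10}, namely
\[
K(T) = M(T) + \int_0^T e^{\mathcal{A}_s(T-\tau)\varepsilon^{-2}} G_s'(0)\cdot b(\tau)\, d\tilde{W}_\tau,
\]
and apply the $c_p$-inequality once more. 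Lemma \ref{lemma-10} controls the first piece by $C_p\varepsilon^{p-2\kappa p}$, and Lemma \ref{small-stoch-integral} controls the stochastic convolution by $C_p\varepsilon^{p-\frac12\kappa}$; here I would note that the latter bound is stated over $[0,T_0]$, but since $\tau^*\le T_0$ and its integrand involves only the globally defined $b$, it applies verbatim on $[0,\tau^*]$.

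It then remains to collect the five contributions, each multiplied by $\varepsilon^p$, and compare exponents:
\[
\varepsilon^{2p-18\kappa p},\quad \varepsilon^{3p-\kappa p},\quad \varepsilon^{3p-3\kappa p},\quad \varepsilon^{2p-2\kappa p},\quad \varepsilon^{2p-\frac12\kappa}.
\]
For $p\ge 1$ and $\kappa$ small, the smallest exponent --- and hence the dominant term as $\varepsilon\to 0$ --- is $2p-18\kappa p$, arising from the $a-b$ error, since $18\kappa p > 2\kappa p$ and $18\kappa p > \tfrac12\kappa$, while the two $3p$-exponents are visibly larger. Summing therefore yields $\mathbb{E}\sup_{[0,\tau^*]}\|\mathcal{R}\|_\alpha^p \le C_p \varepsilon^{2p-18\kappa p}$, as claimed. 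I do not expect any genuine obstacle at this stage: all the analytic difficulty has been absorbed into Lemmas \ref{lemma-1}, \ref{lemma-9}, \ref{small-stoch-integral} and \ref{lemma-10}, and the only care needed is the bookkeeping of the exponents together with the verification that each earlier bound is available on the common time interval $[0,\tau^*]$.
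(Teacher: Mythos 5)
Your proposal is correct and is exactly the argument the paper intends: the paper states this lemma without a written proof, merely as "a consequence of" Lemmas \ref{lemma-1}, \ref{lemma-9} and \ref{lemma-10}, and your decomposition of $\mathcal{R}$ via \eqref{Remainder2}, the splitting of $K$ into $M$ plus the stochastic convolution handled by Lemma \ref{small-stoch-integral}, and the exponent bookkeeping identifying $\varepsilon^{2p-18\kappa p}$ as the dominant contribution fill in precisely the omitted details. No gaps.
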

Moreover, we obtain a bound on $\psi$ which is uniform in time:
\begin{lemma}\label{lemma-12}
Assume the setting in Lemma \ref{lemma-9}. For any $p>1$, there
exists a constant $C_p>0$ such that
\begin{eqnarray}
\mathbb{E}\sup\limits_{0\leq T\leq \tau^*}\| \psi(T)\|^p_\alpha\leq
C_p(\varepsilon^{-p\kappa/3}+\varepsilon^{p-18\kappa
p}).\label{lemma-12-bound}
\end{eqnarray}
\end{lemma}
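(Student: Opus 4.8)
The plan is to reuse the decomposition $\psi(T)=Q(T)+I(T)+J(T)+K(T)$ from \eqref{e:DB1} together with the elementary convexity bound $\|x_1+x_2+x_3+x_4\|_\alpha^p\leq C_p(\|x_1\|_\alpha^p+\cdots+\|x_4\|_\alpha^p)$, so that it suffices to estimate the time-supremum of each of the four pieces separately and then add. Two of them are already in hand: $I$ and $J$ are controlled uniformly in time by Lemma \ref{lemma-1}, giving $\mathbb{E}\sup_{0\leq T\leq\tau^*}\|I(T)\|_\alpha^p\leq C\varepsilon^{2p-\kappa p}$ and the analogous $C\varepsilon^{2p-3\kappa p}$ for $J$. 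Both exponents are large and positive, so these terms will be harmless.

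For the term $Q(T)=e^{\mathcal{A}_sT\varepsilon^{-2}}\psi(0)$ I would invoke the semigroup estimate \eqref{semigroup} in the special case $\beta=\alpha$, which on the stable space reduces to the contraction-type bound $\|e^{\mathcal{A}_st}P_su\|_\alpha\leq Me^{-\rho t}\|P_su\|_\alpha\leq M\|P_su\|_\alpha$. Hence $\sup_{T\geq0}\|Q(T)\|_\alpha\leq M\|\psi(0)\|_\alpha$, and under the hypothesis $\|\psi(0)\|_\alpha\leq\varepsilon^{-\kappa/3}$ this produces $\sup_T\|Q(T)\|_\alpha^p\leq C\varepsilon^{-p\kappa/3}$. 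This is precisely the origin of the first (blow-up) term in the claimed bound, and it is the only source of a negative power of $\varepsilon$.

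The genuinely new point concerns $K$, because Lemma \ref{lemma-2} only yields a time-integrated ($L^p$-in-$\tau$) estimate, which is insufficient to pass to a supremum. Here I would use the splitting introduced just before Lemma \ref{lemma-10}, namely $K(T)=M(T)+\int_0^Te^{\mathcal{A}_s(T-\tau)\varepsilon^{-2}}G_s'(0)\cdot b(\tau)\,d\tilde{W}_\tau$, and combine Lemma \ref{lemma-10}, which gives $\mathbb{E}\sup_{0\leq T\leq\tau^*}\|M(T)\|_\alpha^p\leq C_p\varepsilon^{p-2\kappa p}$, with Lemma \ref{small-stoch-integral}, which bounds the remaining stochastic convolution by $C_p\varepsilon^{p-\kappa/2}$. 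Together these yield the uniform estimate $\mathbb{E}\sup_{0\leq T\leq\tau^*}\|K(T)\|_\alpha^p\leq C_p(\varepsilon^{p-2\kappa p}+\varepsilon^{p-\kappa/2})$. I expect this to be the main obstacle: the factorization argument behind Lemmas \ref{small-stoch-integral} and \ref{lemma-10} is exactly what upgrades the time-averaged control of Lemma \ref{lemma-2} to the uniform-in-time control needed here.

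Finally I would collect the four contributions and compare exponents. The blow-up term $\varepsilon^{-p\kappa/3}$ comes entirely from $Q$, while every remaining exponent, namely $2p-\kappa p$, $2p-3\kappa p$, $p-2\kappa p$, and $p-\kappa/2$, exceeds $p-18\kappa p$ for all $p\geq1$ and small $\kappa$; since $\varepsilon<1$, each of these terms is therefore dominated by $\varepsilon^{p-18\kappa p}$. Adding the pieces gives $\mathbb{E}\sup_{0\leq T\leq\tau^*}\|\psi(T)\|_\alpha^p\leq C_p(\varepsilon^{-p\kappa/3}+\varepsilon^{p-18\kappa p})$, which is the assertion. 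Apart from the uniform control of $K$, the proof is pure bookkeeping of the exponents already established.
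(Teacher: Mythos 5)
Your proof is correct and follows essentially the route the paper intends: the paper states this lemma without a written proof, but the decomposition $\psi=Q+I+J+M+{}$stochastic convolution together with Lemmas \ref{lemma-1}, \ref{small-stoch-integral} and \ref{lemma-10} is exactly the machinery it sets up for this purpose, and your bookkeeping of the exponents (with the $\varepsilon^{-p\kappa/3}$ term coming solely from $Q$ via $\|\psi(0)\|_\alpha\leq\varepsilon^{-\kappa/3}$) is accurate. In fact your direct assembly yields the slightly sharper exponent $\min\{p-2\kappa p,\,p-\kappa/2\}$ for the non-$Q$ part, which dominates the stated $\varepsilon^{p-18\kappa p}$; the $18\kappa$ in the paper's statement merely reflects that the authors could alternatively read the bound off Lemma \ref{lemma-9} and Lemma \ref{lemma-10-1} via $I+J+K=\varepsilon^{-1}\mathcal{R}-(a-b)$.
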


Before we proceed with the final error estimate, we comment on  improved bounds on $\psi$ and $M$.
We know by definition that 
\[
\psi(T) = Q(T) + \int_0^Te^{\mathcal{A}_s(T-\tau)\varepsilon^{-2}}G_s'(0)\cdot b(\tau) d\tilde{W}_\tau + M(T) + I(T)+J(T) \;
\]
As $I(T)+J(T)=\mathcal{O}(\varepsilon^{2-3\kappa})$ by Lemma \ref{lemma-1} and both $M$ and the stochastic integral 
is uniformly in time by $\mathcal{O}(\varepsilon^{1-2\kappa})$, we can show that $\psi-Q$ is small uniformly in time.
This improved bound on $\psi$ can be used in the proof of Lemma \ref{lemma-10}, 
to show that $M$ is smaller of order  $\mathcal{O}(\varepsilon^{1-2\kappa})$.
Thus we obtain 
\begin{equation}
 \mathbb{E}\sup_{T\in[0,\tau^*]} \Big\| \psi(T) 
 - Q(T) - \int_0^Te^{\mathcal{A}_s(T-\tau)\varepsilon^{-2}}G_s'(0)\cdot b(\tau) d\tilde{W}_\tau \Big\|_\alpha^p
 \leq C\varepsilon^{p(2-3\kappa)}\;.
\end{equation}

Before proving main theory, we need to construct a subset of
$\Omega$, which enjoys nearly full probability.

\begin{definition}\label{Omega-*}
For $\kappa>0$ from the definition of $\tau^*$ define the set $\Omega^*\subset \Omega$ of
all $\omega\in \Omega$ such that all these estimates
\[
\sup\limits_{0\leq T\leq\tau^*}\|a(T)\|< \varepsilon^{-\frac{1}{2}\kappa}, \qquad 
\sup\limits_{0\leq T\leq\tau^*}\|\mathcal{R}(T)\|<\varepsilon^{2-19\kappa}
\]
and
\begin{eqnarray*}
\sup\limits_{0\leq
T\leq\tau^*}\|\psi(T)\|_\alpha<\varepsilon^{-\frac{1}{2}\kappa}.
\end{eqnarray*}
hold.
\end{definition}

\begin{lemma}
The set $\Omega^*$ has approximately probability 1.
\end{lemma}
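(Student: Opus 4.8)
The plan is to estimate the probability of the complementary event $(\Omega^*)^c$ and show that it is bounded by $C\varepsilon^p$ for arbitrary $p>0$, which is the precise meaning of ``approximately probability $1$''. Since $\Omega^*$ is the intersection of three events, $(\Omega^*)^c$ is contained in the union of the three corresponding failure events, so a union bound reduces the task to estimating each failure probability separately. For each of these I would apply Chebyshev's inequality at a high moment order $q$ and insert the moment bounds already established, the decisive feature being that every threshold in Definition \ref{Omega-*} is a strictly larger power of $\varepsilon$ than the typical size of the corresponding quantity, leaving a gap of order $\varepsilon^\kappa$ in the exponent that a large moment $q$ amplifies into an arbitrarily high power of $\varepsilon$.

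Concretely, writing $E_a=\{\sup_{[0,\tau^*]}\|a\|\geq\varepsilon^{-\kappa/2}\}$, $E_{\mathcal R}=\{\sup_{[0,\tau^*]}\|\mathcal R\|_\alpha\geq\varepsilon^{2-19\kappa}\}$ and $E_\psi=\{\sup_{[0,\tau^*]}\|\psi\|_\alpha\geq\varepsilon^{-\kappa/2}\}$, I would bound each as follows. For $E_a$, Chebyshev together with the moment estimate \eqref{Remark2.1-1} gives
\[
\mathbb{P}(E_a)\leq\varepsilon^{q\kappa/2}\,\mathbb{E}\sup_{[0,\tau^*]}\|a\|^q\leq C_q\,\varepsilon^{q\kappa/2}\bigl(1+\|a(0)\|^q\bigr)\leq C_q\,\varepsilon^{q\kappa/6},
\]
where the last step uses $\|a(0)\|\leq\varepsilon^{-\kappa/3}$, which is part of the standing hypothesis on the initial data (the same bound imposed on $\psi(0)$ in Lemmas \ref{lemma-7} and \ref{lemma-9}). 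For $E_{\mathcal R}$, Lemma \ref{lemma-10-1} yields $\mathbb{P}(E_{\mathcal R})\leq\varepsilon^{-q(2-19\kappa)}\,\mathbb{E}\sup_{[0,\tau^*]}\|\mathcal R\|_\alpha^q\leq C_q\,\varepsilon^{q\kappa}$. For $E_\psi$, Lemma \ref{lemma-12} gives $\mathbb{E}\sup_{[0,\tau^*]}\|\psi\|_\alpha^q\leq C_q(\varepsilon^{-q\kappa/3}+\varepsilon^{q-18\kappa q})$, whose dominant term for $\kappa<\tfrac1{19}$ is $\varepsilon^{-q\kappa/3}$, so that $\mathbb{P}(E_\psi)\leq C_q\,\varepsilon^{q\kappa/6}$.

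Finally I would choose the moment order $q$ large enough that each of the resulting exponents $q\kappa/6$, $q\kappa$ and $q\kappa/6$ exceeds the prescribed $p$; summing the three contributions through the union bound then produces $\mathbb{P}((\Omega^*)^c)\leq C\varepsilon^p$, i.e. $\mathbb{P}(\Omega^*)\geq 1-C\varepsilon^p$. The only point requiring care — and the step I would check most carefully — is the exponent bookkeeping guaranteeing a genuine positive gap in each case: one must verify $\kappa/2>\kappa/3$ for the $a$- and $\psi$-thresholds, $2-19\kappa<2-18\kappa$ for the remainder threshold, and that the restriction $\kappa<\tfrac1{19}$ keeps $\varepsilon^{q-18\kappa q}$ genuinely small so that it is dominated by $\varepsilon^{-q\kappa/3}$ in the $\psi$-bound. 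I would also note explicitly that $\|a(0)\|$ must be no larger than a power strictly smaller than $\varepsilon^{-\kappa/2}$, as otherwise the factor $\|a(0)\|^q$ would overwhelm the gain $\varepsilon^{q\kappa/2}$; this is exactly what the hypothesis on the initial data secures. Once these comparisons are in place there is no further obstacle, since all the cited moment bounds hold for every $q$ (with $q>1$ where required), so $q$ may be taken arbitrarily large.
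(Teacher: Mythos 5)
Your proposal is correct and follows essentially the same route as the paper: a union bound over the three failure events, Chebyshev's inequality at a high moment order $q$, the moment bounds \eqref{Remark2.1-1}, \eqref{lemma-10-1-bound} and \eqref{lemma-12-bound}, and the choice of $q$ large enough to turn the $\varepsilon^{\kappa q/6}$ and $\varepsilon^{\kappa q}$ gaps into $\varepsilon^{p}$. Your explicit exponent bookkeeping (including the bound $\|a(0)\|\leq\varepsilon^{-\kappa/3}$ needed to absorb the $(1+\|a(0)\|^{q})$ factor) matches what the paper does implicitly in its final chain of inequalities.
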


\begin{proof}
Indeed, let $\Omega^*$ be as in the Definition \ref{Omega-*}. It
easily follows that
\begin{eqnarray*}
\mathbb{P}(\Omega^*)&\geq& 1-\mathbb{P}(\sup\limits_{0\leq
T\leq\tau^*}\|a(T)\|\geq\varepsilon^{-\frac{1}{2}\kappa})
-\mathbb{P}(\sup\limits_{0\leq
T\leq\tau^*}\|\psi(T)\|_\alpha\geq\varepsilon^{-\frac{1}{2}\kappa})\\
&&-\mathbb{P}(\sup\limits_{0\leq
T\leq\tau^*}\|\mathcal{R}(T)\|\geq\varepsilon^{2-19\kappa}).
\end{eqnarray*}
We get by using Chebychev's inequality,
\eqref{Remark2.1-1}, \eqref{small-stoch}, \eqref{lemma-10-1-bound}
and \eqref{lemma-12-bound}
\begin{eqnarray*}
\mathbb{P}(\Omega^*) 
&\geq&1 -
(\varepsilon^{-\frac{1}{2}\kappa})^{-q}
\mathbb{E}\sup\limits_{[0,\tau^*]}\|a(T)\|^q_\alpha
-(\varepsilon^{-\frac{1}{2}\kappa})^{-q}
\mathbb{E}\sup\limits_{[0,\tau^*]}\|\psi(T)\|^q_\alpha\\
&&-(\varepsilon^{2-19\kappa})^{-q}
\mathbb{E}\sup\limits_{[0,\tau^*]}\| \mathcal{R}(T)\|^q_\alpha
\\
&\geq& 1- 2C\varepsilon^{\kappa q /2} \varepsilon^{{-q\kappa/3}} -C \varepsilon^{(19\kappa-2)q}\varepsilon^{(2-18\kappa)q }\\
&\geq&1-C\varepsilon^{p}
\end{eqnarray*}
where we take for a given $p$ the exponent $q$ sufficiently large.
\end{proof}

\begin{proof}[Proof of Theorem 2.1]
From the definition of $\Omega^*$ and $\tau^*$, we have
\begin{eqnarray}
\Omega^*\subseteq \Big\{\sup\limits_{0\leq T\leq \tau^*}
\|a(T)\|_\alpha<\varepsilon^{-\kappa},\sup\limits_{0\leq T\leq
\tau^*} \|\psi(T)\|_\alpha<\varepsilon^{-\kappa}\Big\}\subseteq
\left\{\tau^*=T_0\right\}\subseteq\Omega.\nonumber
\end{eqnarray}
This allows us to get on $\Omega^*$
\[
\sup\limits_{0\leq T\leq T_0}\|\mathcal{R}(T)\|_\alpha
=\sup\limits_{0\leq T\leq\tau^*}\|\mathcal{R}(T)\|_\alpha
\leq C\varepsilon^{2-19\kappa}
\]
such that
\begin{eqnarray}
\mathbb{P}\Big(\sup\limits_{0\leq T\leq T_0}\|\mathcal{R}(T)\|_\alpha\geq
\varepsilon^{2-19\kappa}\Big)\leq 1-\mathbb{P}(\Omega^*)\leq
C\varepsilon^p,
\end{eqnarray}
which, by recalling  representation \eqref{Remainder2}, completes
the proof.
\end{proof}

\section{Example - Ginzburg-Landau/Allen-Cahn equation}
A very simple example to illustrate the main result is the stochastic Ginzburg-Landau  equation
(or Allen-Cahn equation) with linear multiplicative noise on the
interval $D=[0, \pi]$ of the form
\begin{eqnarray}
\partial_tu=(\partial_x^2+1)u+\nu\varepsilon^2u-u^3+\varepsilon
u\cdot\partial_tW(t). \label{example-1}
\end{eqnarray}
In the following we consider the It\^o-representation 
of the the SPDE above in the Sobolev-space $H^1(D)$ with sufficiently smooth noise. 

We set
$$
\mathcal{A}:=\partial_x^2+1,\quad
\mathcal{L}:=\nu \mathcal {I},\quad
\mathcal{F}:=-u^3
$$
Suppose that the equation is subjected to the Dirichlet boundary
condition. Let $\mathcal{H}=L^2([0, \pi ])$ be the space of all
square integrable real-valued functions which are defined on the
interval $[0, \pi]$. In this situation the eigenvalues of
$-\mathcal{A}$ are explicitly known to be
 $\lambda_k=k^2-1$  with  associated eigenvectors
$e_k(x)=\sqrt{\frac{2}{\pi}}\sin(kx)=\delta\sin(kx)$,
$k=1,2,\cdots$, and $\mathcal{N}=span\{sin\}$.
So Assumption 1 is true with $m=2$.

Clearly, Assumption 2 holds true for example for any $\alpha>1/4$ and
$\beta=0$, as for the norm in $\mathcal{H}^\alpha$
we then have $\|uv\|_\alpha \leq C\|u\|_\alpha\|v\|_\alpha$.
We will fix $\alpha=1$ for simplicity.

Note that on the one-dimensional space $\mathcal{N}$ the $\mathcal{H}^\alpha$-norm
is just a multiple of the $\mathcal{H}$-norm. So that
for $u,w \in\mathcal{N}$ the conditions described in Assumption 3 are
satisfied as follows:
\[
\langle\mathcal{F}_c(u), u\rangle=-\int_0^\pi\! u^4(x) dx \leq 0,
\quad\!
\langle\mathcal{F}_c(u,u,w),
w\rangle=-\int_0^\pi\! u^2(x)w^2(x) dx\leq 0.
\]
In addition, condition \eqref{F-condition-4} is true for some
positive constants $C_0, C_1$ and $C_2$, as $\mathcal {F}$ is a
standard cubic nonlinearity.

Define $f_k(x):=\frac{1}{ k}e_k(x),\;k=1,2,\cdots$,
such that $\{f_k\}_{k\in\mathbb{N}}$ is an
orthonormal basis of $\mathcal{H}^1$.
We consider in our application
that $W$ is standard cylindrical $\mathcal{H}^1$-valued Wiener process and define a
covariance operator $Q$ defined by
$Qf_k={\alpha_k}f_k,\;k=1,2,\cdots,$ satisfying
$\text{trace}(Q)=\sum\limits_{k=1}^\infty\alpha_k=C_0<\infty.$ For the operator
$G$ defined as
\[
G(u)\circ v:=u\cdot Q^{1/2}v\;,
\]
we have
\begin{eqnarray*}
\|G(u)\|^2_{\mathscr{L}^2({\mathcal{H}^1}, \mathcal{H}^1)}
&=&\sum\limits_{k\in\mathbb{N}}\|u\cdot(Q^\frac{1}{2}e_k)\|_{\mathcal{H}^1}^2
=\sum\limits_{k\in\mathbb{N}}\alpha_k\|u \cdot e_k\|^2_{\mathcal{H}^1}\\
&\leq&
C\sum\limits_{k\in\mathbb{N}}\alpha_k\|u\|_{\mathcal{H}^1}^2\|e_k\|_{\mathcal{H}^1}^2
=C\|u\|_{\mathcal{H}^1}^2 \text{trace}(Q) \\
&\leq& C\|u\|_{\mathcal{H}^1}^2 <\infty.
\end{eqnarray*}
Therefore, $G(\cdot): \mathcal{H}^1\rightarrow
\mathscr{L}^2({\mathcal{H}^1}, \mathcal{H}^1)$ is a Hilbert-Schmidt operator satisfying $G'(u)\cdot v = v \cdot Q^{1/2}$ and $G''(u)= 0$, so that
Assumption 5 holds.

Therefore, our main theorem states that the dynamics of
\eqref{example-1} can well approximated by  the amplitude equation, which is for $b\in\mathcal{N}$ a stochastic ordinary
differential equation of the form:
\[
db = [\nu b- P_c\mathcal{F}(b)] dt + P_c [G'(0)\cdot b] dW \;.
\]
Let us finally rewrite the the amplitude equation for the actual amplitude of $b$
\[
b=\gamma\sin
\]
Clearly, as $P_cf = \frac2\pi \int_0^\pi \sin(y)f(y)dy \sin$ we have  $P_c\mathcal{F}(b) = - \frac{3}{4}\gamma^3 \sin$. 
Moreover,
\begin{eqnarray*}
P_c [G'(0)\cdot b] dW &=&  \gamma  \sum_{k=1}^\infty  P_c[ \sin Q^{1/2} f_k] d\tilde{B}_k
\\ &=&
\gamma  \sum_{k=1}^\infty  \sqrt{\alpha_k} P_c[ \sin f_k] d\tilde{B}_k
= \gamma  \sum_{k=1}^\infty  \sqrt{\alpha_k} \sigma_k  d\tilde{B}_k \sin
\end{eqnarray*}
as $P_c[\sin(x)\sin(kx)] = \frac2\pi \int_0^\pi \sin(y)^2 \sin(ky) dy \sin(x) = k \sigma_k \sin(x)$
with
\[
\sigma_k:= \left\{
\begin{array}{ccl}
 \frac4\pi \frac{\cos(k\pi)-1}{k^2 (k^2-4)}&:&  k\neq 2,\\
0 &:& k=2.
\end{array}
\right.
\]
Thus we obtain for the amplitude
\begin{eqnarray*}
d\gamma=[\nu\gamma-\frac{3}{4}\gamma^3] dT + \gamma \Sigma^{1/2} d\beta,
\end{eqnarray*}
for a standard real valued Brownian motion and noise strength $\Sigma = \sum\limits_{k=1}^\infty\alpha_k\sigma_k^2$.

\section*{Acknowledgments}
Hongbo Fu would like to thank Professor Dirk Bl\"{o}mker for the
hospitality during his visit in 2016 and CSC for funding this
visit. Hongbo Fu is supported by Natural Science Foundation of
Hubei Province (No. 2018CFB688) and  NSF of China (Nos. 11826209,
11301403, 11971367).










\end{document}